\newtheorem{thm}{Theorem}[section]
\newtheorem{cor}[thm]{Corollary}
\newtheorem{defn}[thm]{Definition}
\newtheorem{lem}[thm]{Lemma}
\newtheorem{rmk}[thm]{Remark}
\newtheorem{prop}[thm]{Proposition}
\theoremstyle{remark}}
\numberwithin{thm}{section}
\numberwithin{equation}{section}
\newcommand{\id}{\operatorname{id}}
\newcommand{\Ad}{\operatorname{Ad}}
\newcommand{\Spec}{\operatorname{Spec}}
\newcommand{\End}{\operatorname{End}}
\newcommand{\Irr}{\operatorname{Irr}}
\newcommand{\SSp}{*\text{-}\operatorname{Irr}}
\newcommand{\Nbb}{\mathbb N}
\newcommand{\Cbb}{\mathbb C}
\newcommand{\Rbb}{\mathbb R}
\newcommand{\Lbb}{\mathbb{L}}
\newcommand{\Dcal}{\mathcal{D}}
\newcommand{\Hcal}{\mathcal{H}}
\newcommand{\Mcal}{\mathcal{M}}
\newcommand{\gf}{\mathfrak{g}}
\newcommand{\hf}{\mathfrak{h}}
\newcommand{\kf}{\mathfrak{k}}
\newcommand{\Cf}{\mathfrak{C}}
\newcommand{\tf}{\mathfrak{t}}
\newcommand{\la}{\langle}
\newcommand{\ra}{\rangle}
\newcommand{\ww}{\mathrm{W}}
\newcommand{\roots}{\mathbf{Q}}
\newcommand{\weights}{\mathbf{P}}
\DeclareMathOperator{\Dom}{Dom}
\begin{document}

\title[Beurling-Fourier algebras of $ q $-deformations]{Beurling-Fourier algebras of $ q $-deformations of compact semisimple Lie groups and complexification}

\author{Heon Lee}
\address{Department of Mathematical Sciences and the Research Institute of Mathematics \\
         Seoul National University \\ 
				 Gwanak-ro 1, Gwanak-gu \\
				 Seoul 08826 \\ 
				 Republic of Korea
}
\email{heoney93@gmail.com}

\author{Christian Voigt} 
\address{School of Mathematics and Statistics \\
         University of Glasgow \\
         University Place \\
         Glasgow G12 8QQ \\
         United Kingdom 
}
\email{christian.voigt@glasgow.ac.uk}

\subjclass[2000]{46L67, 
43A30, 
20G42. 
}

\begin{abstract}
We study Beurling-Fourier algebras of $ q $-deformations of compact semisimple Lie groups. In particular, we show that the space of irreducible representations 
of the function algebras of their Drinfeld doubles is exhausted by the irreducible representations of weighted Fourier algebras associated to a certain family of central weights. 
\end{abstract} 

\maketitle

\section{Introduction} \label{sec:introduction}

Beurling-Fourier algebras are weighted versions of the Fourier algebra of a locally compact group \cite{Ludwig2012}. For Lie groups, the spectra of these commutative Banach algebras are closely related to the complexification of the underlying group \cite{Ghandehari2021, Giselsson2024}. This provides an interesting link between harmonic analysis and geometry. 

The definition of weighted Fourier algebras makes also sense for locally compact quantum groups in the sense of Kustermans and Vaes \cite{KVLCQG}. For compact quantum groups, this was studied by Franz and Lee in \cite{Franz2021}, where it was shown that the resulting theory is again closely linked with complexification. Due to noncommutativity it is however no longer suitable to consider only characters. Instead, at least in the type I situation, one should work with arbitrary irreducible representations.  
For Woronowicz's quantum group $ SU_q(2) $, Franz and Lee determined a concrete family of symmetric central weights such that the representations of the associated Beurling-Fourier algebras exhaust the space of irreducible representations of the quantum Lorentz group \cite{Podles1990}. The latter is defined as the Drinfeld double of $ SU_q(2) $, and can be interpreted as the complexification of $ SU_q(2) $ via the quantum duality principle \cite{Drinfeld}, \cite{Gavarini}. 

In this paper we extend these considerations to $ q $-deformations of general compact semisimple Lie groups. For such a quantum group $ K_q $, we exhibit a natural family of weights on the dual of $ K_q $ such that the associated Beurling-Fourier algebras detect the irreducible representations of the function algebra of the Drinfeld double of $ K_q $, and also obtain precise estimates on the which Hilbert space representations of the polynomial function algebra extend to completely bounded representations of the Beurling-Fourier algebras. 

In view of the above, one may hope to explore the concept of complexification for more general classes of quantum groups using Beurling-Fourier algebras. However, we will not address this problem here.  

Let us conclude with some remarks on notation. We write $ \mathbb{N} = \{0,1,2,\dots \} $ for the set of natural numbers. The inner product of a Hilbert space will always 
be denoted by $ \la \cdot, \cdot \ra $, with the first argument being conjugate linear. Bilinear pairings between vector spaces, such as the canonical pairing between a 
vector space and its dual, will be denoted by $ (\cdot , \cdot) $, and the context will make it clear which pairing is being used.
The algebra of bounded operators on a Hilbert space $ \Hcal $ will be denoted by $ \Lbb(\Hcal) $. Given a $ * $-representation $ \pi $ of an involutive algebra on a Hilbert space, we 
typically denote the representation space by $ \Hcal_\pi $.
The spatial tensor product of von Neumann algebras is denoted by $ \overline{\otimes} $.  
We write $ \otimes $ for the algebraic tensor product of vector spaces, the Hilbert space tensor product, and the minimal tensor product of $ C^* $-algebras. 
In all cases, the context should make it clear which tensor product is being referred to.

We would like to thank Hun Hee Lee for discussions on the subject of this paper.

\section{Preliminaries} \label{sec:preliminaries} 

In this section we collect some background material and fix our notation. 

\subsection{Locally compact quantum groups} \label{sec:Locally compact quantum groups}

The theory of locally compact quantum groups was developed by Kustermans and Vaes \cite{KVLCQG}. We shall briefly review some key facts. 

By definition, a locally compact quantum group $ G $ is given by a von Neumann algebra $ L^\infty(G) $ together with a normal 
unital $ * $-homomorphism $ \Delta: L^\infty(G) \rightarrow L^\infty(G) \overline{\otimes} L^\infty(G) $ called comultiplication, 
satisfying $ (\Delta \otimes \id) \Delta = (\id \otimes \Delta) \Delta $, 
and normal semifinite faithful weights $ \varphi, \psi $ on $ L^\infty(G) $, called left and right Haar weights, respectively. These 
satisfy certain invariance conditions with respect to $ \Delta $, 
and are unique up to a scalar. We note that the von Neumann algebra $ L^\infty(G) $ is typically non-commutative, and not an algebra of function on a measure space. We write $ L^2(G) $ for the GNS Hilbert space of $ \varphi $, and $ \Lambda: \mathcal{N}_\varphi \rightarrow L^2(G) $ for the GNS map, where $ \mathcal{N}_\varphi = \{f \in L^\infty(G) \mid \varphi(f^*f) < \infty\} $. 

With any locally compact quantum group $ G $ one can associate the dual locally compact quantum group $ \widehat{G} $ in such a way that the correspondence 
between $ G $ and $ \widehat{G} $ extends Pontryagin duality. We will refer to the von Neumann algebra $ L^\infty(\widehat{G}) = VN(G) $ as the group von Neumann algebra of $ G $. The Hilbert spaces $ L^2(G) $ and $ L^2(\widehat{G})$ can be identified in a canonical way.

The \emph{Kac-Takesaki operator} $ \ww \in L^\infty (G) \overline{\otimes} VN(G) $ is the operator on $ L^2(G) \otimes L^2(G) $ defined via
\[
((\omega\otimes\id)\ww^*) \Lambda(f) = \Lambda((\omega\otimes \id)\Delta(f)). 
\]
It is unitary and satisfies the \emph{pentagon equation} 
\[
\ww_{12} \ww_{13} \ww_{23} = \ww_{23} \ww_{12}
\]
in $ \Lbb(L^2(G) \otimes L^2(G) \otimes L^2(G)) $, where we are using the leg-numbering notation. Moreover it implements the comultiplication via $ \Delta(f)=\ww^*(\id \otimes f)\ww $ for $ f \in L^\infty(G) $. 
By duality we also get the Kac-Takesaki operator $ \widehat{\ww} $ for $ \widehat{G} $, which 
is linked to $\ww $ via $ \widehat{\ww} = \Sigma \ww^* \Sigma $, where $ \Sigma $ is the tensor flip. 

The \emph{antipode} $ S $ is a densely defined, typically unbounded, operator on $ L^\infty(G) $ such that
\[
(\id\otimes \omega)\ww \in \Dom(S) \text{ and } S((\id\otimes\omega)\ww) = (\id\otimes \omega)(\ww^*),  
\]
and the \emph{unitary antipode} $ R $ is a normal $ * $-preserving antimultiplicative map on $ L^\infty(G) $ 
satisfying $ \Delta R = \sigma (R \otimes R) \Delta $, where $ \sigma $ denotes the tensor flip. 
These maps are linked via $ S = R \tau_{-i/2} = \tau_{-i/2} R $, where $ (\tau_t)_{t \in \mathbb{R}} $ is the group of \emph{scaling automorphisms} of $ L^\infty(G) $. 
We similarly obtain the antipode $ \widehat{S} $, the unitary antipode $ \widehat{R} $, and the scaling group $ (\widehat{\tau}_t)_{t \in \mathbb{R}} $ for $ L^\infty(\widehat{G}) $. 

Let $ L^1(\widehat{G}) $ be the predual of the von Neumann algebra $ L^\infty(\widehat{G}) = VN(G) $. Since $ L^\infty(\widehat{G}) \subset \Lbb(L^2(G)) $ is in standard form we have 
\[
L^1(\widehat{G}) = \{\omega_{\xi, \eta} \mid \xi, \eta \in L^2(G) \}, 
\]
where $ \omega_{\xi, \eta}(T) = \langle \xi, T \eta \rangle  $ is the vector functional associated with $ \xi, \eta $. 
The map $ \widehat{\lambda}: L^1(\widehat{G}) \rightarrow L^\infty(G), \widehat{\lambda}(\omega) = (\omega \otimes \id)(\widehat{\ww}) $ is injective and, by definition, the \emph{Fourier algebra} of $ G $ is the image $ A(G) = \widehat{\lambda}(L^1(\widehat{G})) $ with the norm induced from $ L^1(\widehat{G}) $. Together with the multiplication from $ L^\infty(G) $, the Fourier algebra becomes a Banach algebra. 
In fact, the multiplication of $ A(G) \cong L^1(\widehat{G}) $ identifies with the predual $ \widehat{\Delta}_* $ of the comultiplication $ \widehat{\Delta}: VN(G) \rightarrow VN(G) \overline{\otimes} VN(G)$.
Note also that $ A(G) $ is naturally an operator space via the operator space structure induced from $ L^1(\widehat{G}) $. 

\subsection{Beurling-Fourier algebras of locally compact quantum groups} \label{sec:Beurling-Fourier algebras of LCQG}

Let us first explain what we mean by a weight on the dual of a locally compact quantum group. The following definition is based on the one given in \cite{Lee_Beurling-Fourier}, compare also \cite{Ghandehari2021, Ludwig2012, Franz2021, Giselsson2024}.

\begin{defn} \label{defn:weight on the dual}
Let $ G $ be a locally compact quantum group. 
A densely defined unbounded operator $ w $ on $ L^2(G) $ is called a weight on the dual of $ G $ if the following conditions hold.
\begin{enumerate}[label=W\arabic*., series=W]
\item $ w $ is strictly positive self-adjoint 
\item $ w^{-1} \in VN(G) $
\item $ w^{-2} \otimes w^{-2} \leq \widehat{\Delta}(w^{-2}) $.
\end{enumerate}
A weight $ w $ is called symmetric if $ w^{-1} $ is invariant under the scaling group $ (\widehat{\tau}_t)_{t \in \mathbb{R}} $ and $ \widehat{R}(w^{-1}) = w^{-1} $. It is called central if $ w^{-1} $ is contained in the center of $ VN(G) $. 
\end{defn}

Note that condition $ W3 $ in Definition \ref{defn:weight on the dual} implies 
$
\|w^{-1} \|^{4} \leq \| \widehat{\Delta} (w^{-1} ) \|^{2} = \|w^{-1} \|^2
$
and hence $ \|w^{-1} \| \leq 1 $. 

\begin{rmk} \label{rmk:weight on the dual}
The choice of terminology in Definition \ref{defn:weight on the dual} is somewhat unfortunate because there appear other objects in our discussion 
that are also called weights, specifically the Haar weights on a locally compact quantum group and weights in the sense of Lie theory. 
However, this terminology is well-established in the context of Beurling-Fourier algebras, and we will not attempt to change it. In the sequel, it should always be clear from the context which objects are being referred to. 
\end{rmk}

Let $ M $ be a von Neumann algebra with predual $ M_* $. Then $ M_* $ has an $ M $-bimodule structure defined by
$$
(a \omega b, x) = (\omega, b x a)
$$
for $ \omega \in M_*, x \in M $ and $ a,b \in M $. 

\begin{prop} \label{prop:Beurling-Fourier algebra}
Let $ G $ be a locally compact quantum group and let $ w $ be a weight on the dual of $ G $. Then 
$$
L^1_w(\widehat{G}) = \{w^{-1} \omega \mid \omega \in L^1(\widehat{G}) \} \subseteq L^1(\widehat{G}) 
$$
is a subalgebra of $ L^1(\widehat{G}) $, which becomes a Banach algebra when endowed with the norm
$$
\|w^{-1} \omega \|_{L^1_w(\widehat{G})} = \| \omega \|_{L^1(\widehat{G})}. 
$$
\end{prop}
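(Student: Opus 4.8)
The plan is to verify three things in sequence: that $L^1_w(\widehat{G})$ is well-defined as a set, that it is closed under the convolution multiplication of $L^1(\widehat{G})$, and that the prescribed norm is a genuine submultiplicative Banach-algebra norm. For the first point, I would note that since $w^{-1} \in VN(G)$ is bounded and injective (as $w$ is strictly positive self-adjoint), left multiplication $\omega \mapsto w^{-1}\omega$ on $L^1(\widehat{G})$ via the bimodule structure is injective: if $w^{-1}\omega = 0$ then $(\omega, x w^{-1}) = 0$ for all $x \in VN(G)$, and since $w^{-1}$ has dense range this forces $\omega = 0$. Hence the formula $\|w^{-1}\omega\|_{L^1_w(\widehat{G})} = \|\omega\|_{L^1(\widehat{G})}$ unambiguously defines a norm, and $L^1_w(\widehat{G})$ is isometrically isomorphic as a Banach space to $L^1(\widehat{G})$; in particular it is complete.

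The heart of the matter is closure under multiplication together with submultiplicativity, and this is where condition $W3$ enters. Recall that the product of $\alpha, \beta \in L^1(\widehat{G}) = A(G)$ is $\widehat{\Delta}_*(\alpha \otimes \beta)$. Given $\alpha = w^{-1}\omega$ and $\beta = w^{-1}\omega'$, I want to produce $\omega'' \in L^1(\widehat{G})$ with $w^{-1}\omega'' = \alpha\beta$ and $\|\omega''\| \le \|\omega\|\,\|\omega'\|$. The idea is that $W3$, rewritten as $\widehat{\Delta}(w^{-2}) \ge w^{-2} \otimes w^{-2}$, says that $\widehat{\Delta}(w^{-2})^{1/2} (w^{-1} \otimes w^{-1})^{-1}$ — interpreted via the usual functional-calculus/Douglas-factorisation argument — is a contraction, equivalently there is a contraction $T \in VN(G)\overline{\otimes}VN(G)$ with $w^{-1}\otimes w^{-1} = \widehat{\Delta}(w^{-1})\, T$ (after the appropriate symmetric-square manipulation, using that $\widehat{\Delta}(w^{-1}) = \widehat{\Delta}(w^{-2})^{1/2}$ by multiplicativity of $\widehat{\Delta}$). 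Then for $x \in VN(G)$,
\[
(\alpha\beta, x) = (\omega \otimes \omega', (w^{-1}\otimes w^{-1})\,\widehat{\Delta}(x)) = (\omega \otimes \omega', \widehat{\Delta}(w^{-1})\,T\,\widehat{\Delta}(x)),
\]
and one checks that the functional $x \mapsto (\omega \otimes \omega', \widehat{\Delta}(w^{-1})\, T\, \widehat{\Delta}(x))$ factors as $w^{-1}\omega''$ for a suitable $\omega'' \in L^1(\widehat{G})$ built from $\omega \otimes \omega'$, $T$, and the coproduct, with $\|\omega''\| \le \|T\|\,\|\omega\|\,\|\omega'\| \le \|\omega\|\,\|\omega'\|$. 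Tracking the bimodule identities carefully gives both that $\alpha\beta \in L^1_w(\widehat{G})$ and the submultiplicative estimate $\|\alpha\beta\|_{L^1_w(\widehat{G})} \le \|\alpha\|_{L^1_w(\widehat{G})}\,\|\beta\|_{L^1_w(\widehat{G})}$.

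I expect the main obstacle to be the precise operator-theoretic bookkeeping around $W3$: extracting from the operator inequality $w^{-2}\otimes w^{-2} \le \widehat{\Delta}(w^{-2})$ the correct factorisation through $\widehat{\Delta}(w^{-1})$ rather than through $\widehat{\Delta}(w^{-2})^{1/2}$, which requires knowing that $\widehat{\Delta}$ applied to the strictly positive self-adjoint (possibly unbounded) operator $w$ commutes appropriately with functional calculus — here one uses that $\widehat{\Delta}$ is a normal unital $*$-homomorphism, so $\widehat{\Delta}(w)$ is again strictly positive self-adjoint with $\widehat{\Delta}(w)^{-1} = \widehat{\Delta}(w^{-1}) \in VN(G)\overline{\otimes}VN(G)$ and $\widehat{\Delta}(w^{-2}) = \widehat{\Delta}(w^{-1})^2$. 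Once that is in place, the Douglas lemma supplies the contraction $T$ and the rest is a routine (if notation-heavy) manipulation of the module actions. A cleaner alternative, which I would present if it shortens the argument, is to avoid naming $T$ explicitly and instead estimate directly: for any $x$ with $\|x\| \le 1$,
\[
|(\alpha\beta, x)|^2 = |(\omega\otimes\omega',(w^{-1}\otimes w^{-1})\widehat{\Delta}(x))|^2 \le \|\omega\|^2\|\omega'\|^2 \,\big\|(w^{-1}\otimes w^{-1})\widehat{\Delta}(x)\big\|^2,
\]
but this only bounds the multiplier norm, not the $L^1_w$-norm, so the factorisation approach via $T$ is the one that actually delivers the statement.
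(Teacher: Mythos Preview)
Your approach is essentially the paper's: the contraction $T$ you extract via Douglas' lemma is exactly the operator $\Omega = \widehat{\Delta}(w)(w^{-1}\otimes w^{-1})$ that the paper writes down directly and shows to be a contraction from $W3$, after which the same module computation yields $(w^{-1}\omega)(w^{-1}\eta) = w^{-1}\widehat{\Delta}_*\bigl(\Omega(\omega\otimes\eta)\bigr)$. One small bookkeeping correction: with the convention $(a\omega,x)=(\omega,xa)$ the factor $w^{-1}\otimes w^{-1}$ lands on the \emph{right} of $\widehat{\Delta}(x)$, not the left as in your display, and it is this placement that lets $\widehat{\Delta}(x)(w^{-1}\otimes w^{-1}) = \widehat{\Delta}(xw^{-1})\Omega$ go through cleanly.
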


\begin{proof}
The proof is virtually the same as for classical locally compact groups, see \cite[Proposition~2.3]{Giselsson2024}. For the convenience of the reader we sketch the argument. 

Firstly, consider $ \Omega = \widehat{\Delta}(w)(w^{-1} \otimes w^{-1}) $. From $ w^{-2} \otimes w^{-2} \leq \widehat{\Delta}(w^{-2}) $ it follows that the a priori unbounded operator $ \Omega $ is in fact bounded with $ \|\Omega \| \leq 1 $.
For $ \omega, \eta \in L^1(\widehat{G}) $ and $ f \in L^\infty(\widehat{G}) $ we thus compute
\begin{align} \label{eq:Beurling-Fourier multiplication}
(\widehat{\Delta}_*(w^{-1} \omega \otimes w^{-1} \eta),f) &= (\omega \otimes \eta, \widehat{\Delta}(f) (w^{-1} \otimes w^{-1})) \nonumber \\
&= (\omega \otimes \eta, \widehat{\Delta}(f w^{-1})  \Omega) = (w^{-1} \widehat{\Delta}_*(\Omega (\omega \otimes \eta)), f). 
\end{align}
This shows in particular that $ L^1_w(\widehat{G}) $ is a subalgebra of $ L^1(\widehat{G}) $. Next, the map $ L^1(\widehat{G}) \rightarrow L^1_w(\widehat{G}), \omega \mapsto w^{-1} \omega $ is easily seen to be a bijection.
By definition of the norm on $ L^1_w(\widehat{G}) $, it follows that this map is in fact an isometric isomorphism. Using $ \|\Omega \| \leq 1 $ and \eqref{eq:Beurling-Fourier multiplication} one then checks that $ L^1_w(\widehat{G}) $ is a Banach algebra.
\end{proof}

Given a weight $ w $ on the dual of a locally compact quantum group $ G $, then since 
$ w^{-1} \omega_{\xi, \zeta} = \omega_{\xi, w^{-1} \zeta} $ for all $ \xi, \zeta \in L^2(G) $ we have 
\begin{equation} \label{eq:Beurling-Fourier algebra_alternative definition}
L^1_w(\widehat{G}) = \{\omega_{\xi, w^{-1} \zeta} \mid \xi , \zeta \in L^2(G) \} = \{ \omega_{\xi, \eta} \mid \xi \in L^2(G), \; \eta \in \Dom(w) \},
\end{equation}
where $ \Dom(w) \subseteq L^2(G) $ denotes the domain of the unbounded operator $ w $. This shows in particular that $ L^1_w(\widehat{G}) $ is dense in $ L^1(\widehat{G}) $. Also, if $ \xi \in L^2(G) $ and $ \eta \in \Dom(w) $, then by definition,
\begin{equation} \label{eq:Beurling-Fourier algebra norm bound}
\|\omega_{\xi,\eta} \|_{L^1_w(\widehat{G})} = \| \omega_{\xi, w \eta} \|_{A(G)} \leq \|\xi\| \|\eta \|_{\Dom(w)}
\end{equation}
where $\| \eta \|_{\Dom(w)} = \|\eta \| + \|w \eta \| $ is the graph norm on $ \Dom(w) $.

\begin{defn}
Let $ G $ be a locally compact quantum group and let  $ w $ be a weight on the dual of $ G $. Then 
$$ 
A(G,w) = \widehat{\lambda}(L^1_w(\widehat{G})) \subset A(G) 
$$
is called the Beurling-Fourier algebra associated with $ w $.
\end{defn}

The Beurling-Fourier algebra $ A(G,w) $ is naturally a Banach algebra and an operator space with the structures induced from $ L^1_w(\widehat{G}) $.

\subsection{Compact quantum groups} \label{sec:CQG}

In this section we gather some results from the theory of compact quantum groups. We mostly follow the notation of \cite[Section~4.2.3]{VoigtYuncken}, and refer to \cite{Klimyk} for more information.

A locally compact quantum group $ K $ is called compact if the Haar weight $ \varphi $ is a state on $ L^\infty(K) $.
In this case there exists a unique weak-$ * $ dense subalgebra $ \Cf^\infty(K) \subseteq L^\infty(K) $ 
of matrix coefficients, which can be endowed with a Hopf $ * $-algebra structure such that $ K $ becomes an  \emph{algebraic compact quantum group} in the sense of \cite{vanDaele}. 

Let $ \Irr(K) $ be the set of all equivalence classes of the irreducible unitary representations of $ K $. To each $ \mu \in \Irr(K) $ corresponds a unitary corepresentation 
matrix $ (u^\mu _{ij})_{1 \leq i,j \leq n_\mu} \in M_{n_{\mu}} (\Cf ^\infty (K)) $, where $ n_\mu \in \Nbb $ denotes the dimension of the underlying Hilbert space. 
Then $ (u^\mu _{ij})_{\mu \in \Irr(K), \; 1 \leq i, j \leq n_\mu} $ forms a vector space basis of $ \Cf^\infty(K) $. 
An arbitrary unitary representation of $ K $ can be written as a direct sum of irreducible representations, and the category of all finite dimensional unitary representations of $ K $ forms a rigid $ C^* $-tensor category. 
Given $ \mu \in \Irr(K) $ we write $ \overline{\mu} \in \Irr(K) $ for the conjugate representation, which is the unique irreducible unitary representation such that $ \mu \otimes \overline{\mu} $ contains the trivial representation. 

The comultiplication $ \Delta $, the counit $ \epsilon $ and the antipode $ S $ of $ \Cf^\infty(K) $ are characterised by
\begin{equation*} \label{eq:comultiplication, counit, and antipode of CQG}
\Delta(u^\mu _{ij}) = \sum_{1 \leq k \leq n_\mu} u^\mu _{ik} \otimes u^\mu _{kj} , \quad \epsilon(u^\mu_{ij}) = \delta_{ij}, \quad S(u^\mu _{ij}) = (u^\mu_{ji})^*
\end{equation*}
for all $\mu \in \Irr(K), \; 1 \leq i,j \leq n_\mu $.

Let us denote the algebraic quantum group dual to $ \Cf^\infty (K)$ by $ \Dcal(K) $. Then there exists a family of elements $ (\omega^\mu _{ij})_{\mu \in \Irr(K), \; 1 \leq i, j \leq n_\mu} \subseteq \Dcal (K)$ defined by
\begin{equation*} \label{eq:basis of discrete quantum group}
(\omega^\lambda _{ij}, u^\mu _{kl}) = \delta_{\mu \nu} \delta_{ik} \delta_{jl} 
\end{equation*}
for all $\lambda, \mu \in \Irr(K), \; 1 \leq i,j \leq n_\lambda, \; 1 \leq k, l \leq n_\mu $. These elements form a basis of $ \Dcal(K) $ and are in fact matrix units for this $ * $-algebra. That is, for all $ \lambda, \mu \in \Irr(K) $ and $ 1 \leq i,j \leq n_\lambda $ and $ 1 \leq k, l \leq n_\mu $, we have 
\begin{equation*} \label{eq:matrix elements of discrete quantum group}
\omega^\lambda _{ij} \omega^\mu _{kl} = \delta_{\lambda \mu} \delta_{jk} \omega^\lambda _{il}, \quad (\omega^\mu _{ij})^* = \omega^\mu _{ji}.
\end{equation*}
Therefore, as a $ * $-algebra,
\begin{equation} \label{eq:dual discrete quantum group}
\Dcal(K) = \bigoplus_{\mu \in \Irr(K)} M_{n_\mu}(\Cbb)
\end{equation}
is an algebraic direct sum of full matrix algebras. 

For each $ \mu \in \Irr(K) $ we have an irreducible $ * $-representation $ \pi_\mu: \Dcal(K) \rightarrow M_{n_\mu}(\Cbb) $ given by the projection onto the $ \mu $-component of this direct sum, and all irreducible $ * $-representations of $ \Dcal(K) $ arise in this way. If we write $ V(\mu) $ for the representation space of $ \pi_\mu $ then the decomposition in \eqref{eq:dual discrete quantum group}can be written as
\begin{equation*} \label{eq:decomposition of discrete algebraic quantum group}
\Dcal(K) = \bigoplus_{\mu \in \Irr(K)} \End( V(\mu)).
\end{equation*}
We shall also write
\begin{equation} \label{eq:basis of compact quantum group}
u^\mu _{ij} = \la e_i ^\mu | \cdot | e_j ^\mu \ra \in \End (V(\mu))^*
\end{equation}
where $ (e_i^\mu)_{1 \leq i \leq n_\mu} $ is a fixed orthonormal basis of the Hilbert space $ V(\mu) $ and $ \la v | \cdot | w \ra = \omega_{v,w} \in \Lbb (V(\mu))_* = \End(V(\mu))^* $. Thus,
\begin{equation*} \label{eq:decomposition of compact algebraic quantum group}
\Cf^\infty (K) = \bigoplus_{\mu \in \Irr(K)} \End(V(\mu))^*  
\end{equation*}
as a coalgebra.

Denote the GNS-map of the Haar state $ \varphi $ by $\Lambda: \Cf^\infty(K) \rightarrow L^2(K) $. 
Then $ \Cf^\infty(K) $ acts on $ L^2(K) $  by left translations,  
\begin{equation*} \label{eq:L2 action of compact quantum group}
f \Lambda(g) = \Lambda(fg), \quad f,g \in \Cf^\infty(K), 
\end{equation*}
and $ \Dcal(K) $ acts on $ L^2(K) $ by convolution, 
\begin{equation} \label{eq:L2 action of discrete quantum group}
\omega \Lambda(g) = \Lambda \Big( \big( (\omega S^{-1}) \otimes \id \big) \Delta(g) \Big), \quad \omega \in \Dcal(K), \; g \in \Cf^\infty(K).
\end{equation}
Moreover, $ L^\infty(K) $ and $ VN(K) $ are the weak-$ * $ closures of $ \Cf^\infty(K) $ and $ \Dcal(K) $ in $ \Lbb(L^2(K)) $ via these representations. 
In particular, 
\begin{align}\label{eq:decopmposition of discrete quantum group}
VN(K) &= l^\infty\text{-}\prod_{\mu \in \Irr(K)} \End (V(\mu)) \nonumber \\
&= \{X = (X_\mu) \in \prod_{\mu \in \Irr(K)} \End (V(\mu)) \mid \sup_{\mu \in \Irr(K)} \| X_\mu \| < \infty \}.
\end{align}
Using the notation introduced further above, the Kac-Takesaki operator $ \ww \in \Lbb(L^2(K) \otimes L^2(K)) $ can be written as
\begin{equation} \label{eq:Fundamental multiplicative unitary}
\ww = \sum_{\mu \in \Irr(K)} \sum_{1 \leq i,j \leq n_\mu} u^\mu _{ij} \otimes \omega^\mu _{ij}, 
\end{equation}
with convergence understood in the strong operator topology. 
Note that $ \Cf^\infty (K) $ is naturally contained in $ A(K) $. Explicitly, we have
\begin{equation} \label{eq:algebraic compact quantum group embeds into the Fourier algebra}
\Cf^\infty(K) \ni \sum_{i} (\id \otimes \varphi) \big(\Delta(f_i)^* (1 \otimes g_i) \big) \longmapsto \hat{\lambda} \Big( \sum_{i} \omega_{f_i ,g_i} \Big) \in A(K)
\end{equation}
for all $ f_i, g_i \in \Cf^\infty(K) $.

\subsection{Weights on duals of compact quantum groups} \label{sec:weight on the dual of CQG}

If $ K $ is a compact quantum group then we have 
$$
VN(K) \otimes VN(K) \subseteq \prod_{\lambda, \mu \in \Irr(K)} \End(V(\lambda)) \otimes \End(V(\mu)).
$$
Moreover, since every $ * $-representation of $ \Dcal(K) $ is completely reducible we get, for each $ \lambda, \mu \in \Irr(K) $,
$$
\pi_\lambda \otimes \pi_ \mu \cong \bigoplus_{j = 1}^k \pi_{\nu_j}
$$
for some $ \nu_1, \dots, \nu_k \in \Irr(K) $. We express this decomposition as
$$
\pi_\lambda \otimes \pi_\mu \cong \bigoplus_{\nu \subseteq \lambda \otimes \mu} \pi_\nu.
$$
Therefore, for each $ \lambda, \mu \in \Irr(K) $, there exists a unitary $ U^{\lambda \mu}: V(\lambda) \otimes V(\mu) \rightarrow \bigoplus_{\nu \subseteq \lambda \otimes \mu} V(\nu) $ 
such that
$$ 
C_{U^{\lambda \mu}} (\pi_\lambda \otimes \pi_{\mu}) = U^{\lambda \mu} (\pi_\lambda \otimes \pi_{\mu}) (U^{\lambda \mu})^* = \bigoplus_{\nu \subseteq \lambda \otimes \mu} \pi_\nu, 
$$
where $ C_{U^{\lambda \mu}} $ is the conjugation by $ U^{\lambda \mu} $. Defining $ U = \prod_{\lambda, \mu \in \Irr(K)} U^{\lambda \mu} $ and $ C_U = \prod_{\lambda, \mu \in \Irr(K)} C_{U^{\lambda \mu}} $, we get an isomorphism of $ * $-algebras
\begin{equation*} \label{eq:isomorphism of tensor decomposition}
C_U: \prod_{\lambda, \mu \in \Irr(K) } \End (V(\lambda)) \otimes \End (V(\mu)) \longrightarrow 
\prod_{\lambda, \mu \in \Irr(K) } \Big( \bigoplus_{\nu \subseteq \lambda \otimes \mu} \End(V(\nu)) \Big).
\end{equation*}
We then have the following fact, see \cite[Proposition~3.1]{Franz2021}.

\begin{lem} \label{lem:VN(K) comultiplication}
For $ X = (X_\mu)_{\mu \in \Irr(K)} \in VN(K) $ we have
\begin{equation*} \label{eq:VN(K) comultiplication}
C_U(\widehat{\Delta} (X) ) = ( \bigoplus_{\nu \subseteq \lambda \otimes \mu} X_\nu )_{\lambda , \mu} \in \prod_{\lambda, \mu \in \Irr (K)} \Big( \bigoplus_{\nu \subseteq \lambda \otimes \mu } \End (V(\nu)) \Big).
\end{equation*}
\end{lem}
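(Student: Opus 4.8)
The plan is to establish the identity first for elements of the weak-$*$ dense subalgebra $\Dcal(K) \subseteq VN(K)$, where it follows directly from the fusion rules set up just above, and then to extend it to all of $VN(K)$ by a normality argument.

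So first I would take $\omega \in \Dcal(K)$ and, for fixed $\lambda, \mu \in \Irr(K)$, consider the $(\lambda,\mu)$-component of $\widehat{\Delta}(\omega)$ inside $\prod_{\lambda,\mu} \End(V(\lambda)) \otimes \End(V(\mu))$. By construction, applying the block projection $\pi_\lambda \otimes \pi_\mu$ to $\widehat{\Delta}(\omega)$ is exactly the action of $\omega$ in the tensor product representation of the discrete quantum group dual to $K$; this is precisely the $*$-representation $\pi_\lambda \otimes \pi_\mu$ of $\Dcal(K)$ considered in the paragraph preceding the lemma. By the defining property of the unitaries $U^{\lambda\mu}$ we have $C_{U^{\lambda\mu}}(\pi_\lambda \otimes \pi_\mu) = \bigoplus_{\nu \subseteq \lambda \otimes \mu} \pi_\nu$, and since $\pi_\nu(\omega)$ is nothing but the $\nu$-component $\omega_\nu$ of $\omega$ in $\Dcal(K) = \bigoplus_{\rho} \End(V(\rho))$, I get
$$
C_{U^{\lambda\mu}}\big( (\pi_\lambda \otimes \pi_\mu)(\widehat{\Delta}(\omega)) \big) = \bigoplus_{\nu \subseteq \lambda \otimes \mu} \omega_\nu
$$
for every $\omega \in \Dcal(K)$, which is exactly the claimed formula for $X = \omega$. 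In particular $C_U(\widehat{\Delta}(\omega))$ lies in the weak-$*$ closed block-diagonal subalgebra $\prod_{\lambda,\mu} \bigoplus_{\nu \subseteq \lambda \otimes \mu} \End(V(\nu))$.

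To pass to an arbitrary $X \in VN(K)$ I would argue by normality. The map $X \mapsto C_U(\widehat{\Delta}(X))$ is normal, being the composite of the comultiplication $\widehat{\Delta}$ (normal by the axioms) with the $*$-isomorphism $C_U$. The right-hand side $X \mapsto (\bigoplus_{\nu \subseteq \lambda \otimes \mu} X_\nu)_{\lambda,\mu}$ is normal as well: its $(\lambda,\mu)$-component is a finite direct sum of the block projections $X \mapsto X_\nu$, each of which is normal, and an $l^\infty$-product of uniformly bounded normal maps is again normal (its pre-adjoint is the corresponding $l^1$-sum of pre-adjoints). Since the two normal maps agree on the weak-$*$ dense subalgebra $\Dcal(K)$ by the previous paragraph, they agree on all of $VN(K)$, and since the block-diagonal subalgebra is weak-$*$ closed the value stays in it. This proves the lemma.

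The only genuinely delicate point is the identification, in the second paragraph, of the block $(\pi_\lambda \otimes \pi_\mu)(\widehat{\Delta}(\omega))$ with the tensor product representation and the consequent matching of the intertwiners $U^{\lambda\mu}$: this is a matter of unwinding the duality between $\Cf^\infty(K)$ and $\Dcal(K)$ and of bookkeeping the multiplicities in $\lambda \otimes \mu$. If one prefers to avoid this, the computation can be done by hand: since $\widehat{\Delta}_*$ is the multiplication of $A(K) \cong L^1(\widehat{K})$ and $\Cf^\infty(K)$ is a subalgebra of $A(K)$, one has $(\widehat{\Delta}(X), u^\lambda_{ij} \otimes u^\mu_{kl}) = (X, u^\lambda_{ij} u^\mu_{kl})$; expanding the product $u^\lambda_{ij} u^\mu_{kl}$ via the Clebsch--Gordan unitary $U^{\lambda\mu}$ diagonalising the corepresentation $u^\lambda \boxtimes u^\mu$ into $\bigoplus_{\nu \subseteq \lambda \otimes \mu} u^\nu$, and pairing with $X$, one recovers the matrix coefficients of $(U^{\lambda\mu})^* \big( \bigoplus_{\nu \subseteq \lambda \otimes \mu} X_\nu \big) U^{\lambda\mu}$, which since $\lambda, \mu$ were arbitrary is the assertion.
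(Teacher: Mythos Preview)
Your proof is correct and follows the same strategy as the paper: establish the identity on the weak-$*$ dense subalgebra $\Dcal(K)$ and then extend by normality. The paper carries out the verification on $\Dcal(K)$ via the explicit matrix-coefficient computation you sketch as your alternative at the end---pairing $\widehat{\Delta}(\omega^\eta_{ij})$ against $u^\lambda_{kl}\otimes u^\mu_{mn}$ (note that with the paper's skew-pairing convention the product appears as $u^\mu_{mn}u^\lambda_{kl}$, not $u^\lambda u^\mu$)---whereas your primary argument bypasses this by invoking the defining intertwiner property of $U^{\lambda\mu}$ directly, which is a clean shortcut.
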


\begin{proof}
For the convenience of the reader we shall give a proof.

For each $ \lambda \in \Irr(K) $, let $ (e^\lambda _i)_{1 \leq i \leq n_\lambda} $ be the orthonormal basis as in \eqref{eq:basis of compact quantum group}. Now, observe that 
for $ \lambda, \mu, \eta \in \Irr(K) $ and indices corresponding to each irreducible representation involved,
\begin{align*}
(\widehat{\Delta} &( \omega^\eta _{ij}), u^{\lambda} _{kl} \otimes u^{\mu} _{mn}) = (\omega^\eta _{ij} , u^\mu _{mn} u^\lambda _{kl} ) 
= \Big(\omega^\eta _{ij}, \la e_k ^\lambda \otimes e_m ^\mu | \cdot | e_l ^\lambda \otimes e_n ^\mu \ra \Big) \\
&= \la e_k ^\lambda \otimes e_m ^\mu | (\pi_\lambda \otimes \pi_\mu) (\omega^\eta _{ij} ) | e_l ^\lambda \otimes e_n ^\mu \ra \\
&= \Big\la U^{\lambda \mu} (e_k ^\lambda \otimes e_m ^\mu) \Big| \bigoplus_{\nu \subseteq \lambda \otimes \mu} \pi_{\nu} (\omega^\eta _{ij} ) \Big| U^{\lambda \mu} (e_l^\lambda \otimes e_n^\mu) \Big\ra \\
&= \Big\la U \big((\delta_{\theta \lambda} \delta_{\mu \zeta} e_k^\lambda \otimes e_m^\mu)_{\theta \zeta} \big)\Big| \big( \bigoplus_{\nu \subseteq \theta \otimes \zeta} \pi_{\nu} (\omega^\eta_{ij} ) \big)_{\theta \zeta} \Big| U \big((\delta_{\theta \lambda} \delta_{\mu \zeta} e_l^\lambda \otimes e_n^\mu)_{\theta \zeta})\Big\ra \\
&= \Big\la (\delta_{\theta \lambda} \delta_{\mu \zeta} e_k^\lambda \otimes e_m^\mu)_{\theta \zeta} \Big| 
C_U^{-1} \Big(\big(\bigoplus_{\nu \subseteq \theta \otimes \zeta} \pi_{\nu} (\omega^\eta_{ij}) \big)_{\theta \zeta} \Big) \Big| 
(\delta_{\theta \lambda} \delta_{\mu \zeta} e_l^\lambda \otimes e_n^\mu)_{\theta \zeta} \Big\ra \\
&= \Big(C_U ^{-1} \Big( \big( \bigoplus_{\nu \subseteq \theta \otimes \zeta} \delta_{\eta \nu} \omega^{\eta} _{ij} \big)_{\theta \zeta} \Big), u^{\lambda} _{kl} \otimes u^{\mu} _{mn} \Big).
\end{align*}
So, for $ \eta \in \Irr(K) $ and $ 1 \leq i,j \leq n_\eta $, we get
$$
C_U\big(\widehat{\Delta}(\omega^\eta _{ij})\big)_{\lambda, \mu}
= \bigoplus_{\nu \subseteq \lambda \otimes \mu} \delta_{\nu\eta} \omega^{\eta}_{ij}.
$$
This yields the claim since $ \Dcal(K) \subset VN(K) $ is weak-$ * $ dense.
\end{proof}

For a central weight $ w $ on the dual of a compact quantum group $ K $ we shall write $ w = (w(\mu))_{\mu \in \Irr(K)} $ in the decomposition \eqref{eq:decopmposition of discrete quantum group} in the sequel. That is, we will identify $ w $ with a function $ w: \Irr(K) \rightarrow (0,\infty) $. 

\begin{prop} \label{prop:central weight on the dual}
Let $ K $ be a compact quantum group and let $ w: \Irr(K) \rightarrow (0,\infty) $. Then $ w $ is a central weight on the dual of $ K $ if and only if the following conditions are satisfied:
\begin{enumerate}[label=Z\arabic*., series=Z]
\item $ w(\mu) \geq 1 $ for all $ \mu \in \Irr(K) $
\item $ w(\nu) \leq w(\lambda) w(\mu) $ for all $ \lambda, \mu, \nu \in \Irr(K) $ such that $ \nu \subseteq \lambda \otimes \mu $.
\end{enumerate}
Moreover $ w $ is symmetric if and only if $ w(\mu) = w(\overline{\mu}) $ for all $ \mu \in \Irr(K) $.
\end{prop}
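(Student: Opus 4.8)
I would begin by unwinding the identification preceding the statement. Regarding $w$ as the (generally unbounded) operator on $L^2(K)$ which acts as the scalar $w(\mu)$ on the $\mu$-isotypic subspace $z_\mu L^2(K)$, where $z_\mu \in VN(K)$ is the minimal central projection indexed by $\mu$, we have $w = (w(\mu)\id_{V(\mu)})_{\mu \in \Irr(K)}$ and $w^{-1} = (w(\mu)^{-1}\id_{V(\mu)})_{\mu \in \Irr(K)}$ in the decomposition \eqref{eq:decopmposition of discrete quantum group}. Then $w$ is automatically strictly positive, self-adjoint and densely defined, so W1 is free, and $w^{-1}$, being scalar on each block, automatically lies in the center of $VN(K)$; thus the only conditions with content are W2 and W3. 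Since $\|w^{-1}\| = \sup_\mu w(\mu)^{-1}$, condition W2 is equivalent to $\inf_\mu w(\mu) > 0$. In particular Z1 implies W2 (with $\|w^{-1}\| \le 1$), and conversely, whenever W2 and W3 hold, the estimate recorded after Definition~\ref{defn:weight on the dual} forces $\|w^{-1}\| \le 1$, i.e.\ $w(\mu) \ge 1$ for all $\mu$, which is Z1. So it suffices to prove that, once W2 holds, W3 is equivalent to Z2.

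The plan for this heart of the argument is to apply the $*$-isomorphism $C_U$ of Lemma~\ref{lem:VN(K) comultiplication} — which, being conjugation by a unitary, is an order isomorphism — to both sides of the inequality $w^{-2} \otimes w^{-2} \le \widehat{\Delta}(w^{-2})$ in $VN(K) \overline{\otimes} VN(K)$, both sides being bounded positive operators once $w^{-1} \in VN(K)$. By Lemma~\ref{lem:VN(K) comultiplication} applied to $X = w^{-2} = (w(\nu)^{-2}\id_{V(\nu)})_\nu$, the right-hand side becomes the block operator whose $(\lambda,\mu)$-component is $\bigoplus_{\nu \subseteq \lambda \otimes \mu} w(\nu)^{-2}\id_{V(\nu)}$. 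For the left-hand side one uses that $w^{-2}\otimes w^{-2} = (w(\lambda)^{-2} w(\mu)^{-2}\id_{V(\lambda)\otimes V(\mu)})_{\lambda,\mu}$ and that conjugating a scalar multiple of the identity by the unitary $U^{\lambda\mu}$ yields that same scalar multiple of the identity on $\bigoplus_{\nu \subseteq \lambda\otimes\mu} V(\nu)$; so the left-hand side becomes the block operator whose $(\lambda,\mu)$-component is $w(\lambda)^{-2} w(\mu)^{-2}\id_{\oplus_{\nu\subseteq\lambda\otimes\mu}V(\nu)}$. Comparing these two block by block, and hence summand by summand, W3 is equivalent to $w(\lambda)^{-2}w(\mu)^{-2} \le w(\nu)^{-2}$ for all $\lambda, \mu \in \Irr(K)$ and all $\nu \subseteq \lambda\otimes\mu$, which after passing to reciprocals and positive square roots is precisely Z2. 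Assembling the implications, $w$ is a central weight if and only if W2 and W3 hold, if and only if Z1 and Z2 hold.

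For the symmetry statement I would argue as follows. Recall $w$ is symmetric exactly when $w^{-1}$ is invariant under the scaling group $(\widehat{\tau}_t)_{t\in\Rbb}$ and $\widehat{R}(w^{-1}) = w^{-1}$. Each $\widehat{\tau}_t$ is an automorphism of $VN(K)$, so it permutes the minimal central projections $z_\mu$; since $t \mapsto \widehat{\tau}_t(z_\mu)$ is weak-$*$ continuous and equals $z_\mu$ at $t=0$, it is constant, so $\widehat{\tau}_t$ restricts to an automorphism of the type~I factor $z_\mu VN(K) \cong \End(V(\mu))$, which by Skolem--Noether is inner and hence fixes the scalars; thus $w^{-1}$ is always $(\widehat{\tau}_t)$-invariant and that part of the condition is vacuous. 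On the other hand $\widehat{R}$ is a $*$-preserving anti-automorphism, so it too permutes the $z_\mu$, and it is standard for discrete quantum groups that the induced involution of $\Irr(K)$ is $\mu \mapsto \overline{\mu}$, the unitary antipode implementing passage to the conjugate representation. Since $\widehat{R}$ sends $w(\mu)^{-1}\id_{V(\mu)}$ to $w(\mu)^{-1}\id_{V(\overline{\mu})}$, we get $\widehat{R}(w^{-1}) = \sum_\mu w(\mu)^{-1} z_{\overline{\mu}} = \sum_\mu w(\overline{\mu})^{-1} z_\mu$, which equals $w^{-1}$ if and only if $w(\mu) = w(\overline{\mu})$ for all $\mu$.

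The step I expect to be the main obstacle, or at least the one demanding the most care, is the equivalence W3 $\Leftrightarrow$ Z2. Centrality is used crucially: it is exactly because $w^{-2}\otimes w^{-2}$ is block-scalar that $C_U$ carries it to an explicit block-scalar operator that can be compared term by term with $C_U(\widehat{\Delta}(w^{-2}))$; for a non-central weight the conjugation by $C_U$ would genuinely mix the matrix entries and no such clean reduction would be available. The auxiliary facts invoked in the symmetry argument — that $(\widehat{\tau}_t)$ acts innerly on each matrix block and that $\widehat{R}$ interchanges the blocks of $\mu$ and $\overline{\mu}$ — are standard for discrete quantum groups (see \cite{VoigtYuncken, Franz2021}) but should be cited or verified explicitly.
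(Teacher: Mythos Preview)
Your proposal is correct and follows essentially the same approach as the paper: both reduce W3 to the scalar inequality $w(\lambda)^{-2}w(\mu)^{-2} \le w(\nu)^{-2}$ via Lemma~\ref{lem:VN(K) comultiplication}, obtain Z1 from the bound $\|w^{-1}\|\le 1$, and handle symmetry by noting that the unitary antipode interchanges the matrix blocks of $\mu$ and $\overline{\mu}$. Your version simply supplies more detail (notably the automatic scaling-group invariance and the explicit computation of $C_U(w^{-2}\otimes w^{-2})$), which the paper leaves implicit.
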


\begin{proof}
Let $ w = (w(\mu))_{\mu \in \Irr(K)} $ be a central weight on the dual of $ K $. Then $ 0 < w^{-1}(\mu) \leq 1 $ for all $ \mu \in \Irr(K) $ by condition W1 and 
the fact that $ \|w^{-1} \| \leq 1 $, so that $ w(\mu) \geq 1 $ for all $ \mu \in \Irr(K) $. 
Thanks to Lemma~\ref{lem:VN(K) comultiplication}, condition W3 translates to
$$
w^{-2} (\lambda) w^{-2} (\mu) \leq w^{-2} (\nu)
$$
for all $ \lambda, \mu \in \Irr(K) $ and $ \nu \subseteq \lambda \otimes \mu $, which is equivalent to Z2. 

Conversely, given a function $ w: \Irr(K) \rightarrow (0,\infty) $ satisfying Z1--Z2, the unbounded operator $ w $ on $ L^2(K) $ such that $ w^{-1} = (w(\mu)^{-1}) \in VN(K) $ is easily checked to be a weight on the dual of $ K $.

Since the (unitary) antipode exchanges 
the matrix blocks corresponding to $ \mu $ and $ \overline{\mu} $ one finds that $ w $ is symmetric if and only if $ w(\mu) = w(\overline{\mu}) $ for all $ \mu \in \Irr(K) $.
\end{proof}

\begin{rmk}
Let $ w $ be a symmetric central weight on the dual of a compact quantum group $ K $ in the sense of Definition \ref{defn:weight on the dual}. If the value $ w(\epsilon) $ of $ w $ on the trivial representation $ \epsilon $ equals $ 1 $, then $ L = \log(w) $ is a length function on $ \widehat{K} $ in the sense of \cite{Vergnioux2007}. 
Note that 
we can always stipulate $ w(\epsilon) = 1 $ without affecting the conditions in Proposition \ref{prop:central weight on the dual}. That is, symmetric central weights on duals of compact quantum groups are essentially the same thing as length functions. 
\end{rmk}

Throughout the rest of the paper, we will be mostly interested in symmetric central weights on duals of compact quantum groups. We note that some of the arguments below work in the same way without the symmetry condition.

\subsection{Beurling-Fourier algebras of compact quantum groups} \label{Beurling-Fourier algebras of CQG}

Let $ K $ be a compact quantum group and let $ w $ be a symmetric central weight on the dual of $ K $. 
Recall that $ \Cf^\infty(K) $ embeds into $ A(K) $ via \eqref{eq:algebraic compact quantum group embeds into the Fourier algebra}. Since $ \Lambda(\Cf^\infty(K)) \subseteq L^2(K) $ is densely contained in $ \Dom(w) $ with respect to the graph norm, it follows from \eqref{eq:Beurling-Fourier algebra_alternative definition} and \eqref{eq:Beurling-Fourier algebra norm bound} that we have the following dense inclusion of algebras
\begin{equation} \label{eq:Cinfty(K) embeds into A(K,W)}
\Cf^\infty(K) \ni \sum_i (\id \otimes \varphi) \big(\Delta(f_i)^* (1 \otimes g_i^*) \big) \longmapsto \hat{\lambda} \Big( \sum_i \omega_{f_i, g_i} \Big) \in A(K,w).
\end{equation}
We shall be interested in conditions which guarantee that an 
algebra representation $ \theta: \Cf^\infty(K) \rightarrow \Lbb(\Hcal) $ extends to a completely bounded (CB) representation $ \tilde{\theta}: A(K,w) \rightarrow \Lbb(\Hcal) $. Recalling the expression of the fundamental multiplicative unitary 
in \eqref{eq:Fundamental multiplicative unitary}, we define
\begin{equation} \label{eq:definition of v_varphi}
v_\theta = (\theta \otimes \id)(\ww) = \Big(\sum_{1 \leq i,j \leq n_\mu} \theta(u^\mu_{ij}) \otimes \omega^\mu _{ij} \Big)_{\mu} 
\in \prod_{\mu \in \Irr(K)} \Lbb(\Hcal) \otimes \Lbb(V(\mu)).
\end{equation}
We denote the $ \mu $-component of $ v_\theta $ by $ v_\theta^\mu $.
Since $ {l^\infty}\text{-}\prod_{\mu \in \Irr(K)} \Lbb(\Hcal) \overline{\otimes} \Lbb(V(\mu)) \cong \Lbb(\Hcal) \overline{\otimes} VN(K) $ we have
\begin{equation} \label{eq:supremum condition on v_varphi}
v_\theta \in \Lbb(\Hcal) \otimes VN(K) \Longleftrightarrow \sup_{\mu \in \Irr(K)} \| v_\theta^\mu \| < \infty.
\end{equation}
Next recall that if $ M, N $ are von Neumann algebras and $ CB(M_*, N) $ denotes the space of all CB linear maps from $ M_* $ into $ N $ then we have a canonical isomorphism
$$
N \overline{\otimes} M \cong CB(M_*, N),
$$
induced by the correspondence
\begin{equation} \label{eq:the set of CB maps characterization}
N \overline{\otimes} M \ni n \otimes m \longmapsto (m, \cdot \;) n \in CB(M_*, N), 
\end{equation}
see \cite[Theorem 2.5.2]{Pisier}. Observe that, for $ f,g \in \Cf^\infty(K) $,
\begin{align*}
(\id \otimes \omega_{f, g})(v_\theta) &= \sum_{\mu \in \Irr(K)} \sum_{1 \leq i, j \leq n_\mu} \theta(u^\mu _{ij}) \langle \Lambda(f), \omega^\mu _{ij} \Lambda(g) \rangle\\
&= \sum_{\mu \in \Irr(K)} \sum_{1 \leq i, j \leq n_\mu} \theta(u^\mu _{ij}) \big(\omega^\mu_{ij}, S^{-1} (\id \otimes \varphi) \big((1 \otimes f^*) \Delta(g) \big) \big) \\
&= \sum_{\mu \in \Irr(K)} \sum_{1 \leq i, j \leq n_\mu} \theta(u^\mu _{ij}) \big(\omega^\mu_{ij}, (\id \otimes \varphi) \big( \Delta(f)^* (1 \otimes g ) \big) \big) \\
&= \theta \Big((\id \otimes \varphi) \big(\Delta(f)^* (1 \otimes g) \big) \Big), 
\end{align*}
where we use \eqref{eq:L2 action of discrete quantum group} in the second equality and left invariance of the Haar state $ \varphi $ in the third. Note also that all sums in this calculation are finite. 
In combination with \eqref{eq:algebraic compact quantum group embeds into the Fourier algebra}, it follows that if $ \theta $ extends to a CB map $ \tilde{\theta}: A(K) \rightarrow \Lbb(\Hcal) $ then $ v_\theta $ is the element in $ \Lbb(\Hcal) \overline{\otimes} VN(K) $ corresponding to $ \theta $ under the identification
$$
\Lbb(\Hcal) \overline{\otimes} VN(K) \cong CB(A(K), \Lbb(\Hcal))
$$
given via \eqref{eq:the set of CB maps characterization}. In fact, the condition $ v_\theta \in \Lbb(\Hcal) \overline{\otimes} VN(K) $ is equivalent to the CB-extendability of $ \theta $ to $ A(K) $.

Recall from the proof of Proposition \ref{prop:Beurling-Fourier algebra} that the map
$$
A(K) \ni \widehat{\lambda}(\omega) \longmapsto \widehat{\lambda}(w^{-1} \omega) \in A(K,w)
$$
is an isometric isomorphism. In particular, we can identify $ A(K,w) \cong VN(K)_* $ via the pairing
$$ 
(\;\cdot\;, \;\cdot\; )_w: A(K,w) \otimes VN(K) \ni (\widehat{\lambda}(w^{-1} \omega), x) \longmapsto (\omega, x) \in \Cbb.
$$
Using this identification, we can apply the same reasoning as above to $ A(K,w) $ and arrive at the following result.  

\begin{prop} \label{prop:CB extension condition}
Let $ K $ be a compact quantum group. 
Given a symmetric central weight $ w = (w(\mu))_{\mu \in \Irr(K)} $ on the dual of $ K $, an algebra representation $ \theta: \Cf^\infty(K) \rightarrow \Lbb(\Hcal) $ extends to a completely bounded algebra 
representation $ \tilde{\theta}: A(K,w) \rightarrow \Lbb(\Hcal) $ if and only if $ v_\theta(1 \otimes w^{-1}) \in \Lbb(\Hcal) \overline{\otimes} VN(K) $, which in turn is equivalent to the condition
\begin{equation*} \label{eq:CB extension condition}
\sup_{\mu \in \Irr(K)} \frac{\| v_\theta^\mu \|}{w(\mu)} < \infty.
\end{equation*}
\end{prop}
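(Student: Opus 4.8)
The plan is to piggyback on the analysis that was carried out just before the statement for the unweighted Fourier algebra $A(K)$, using the isometric isomorphism $A(K) \cong A(K,w)$, $\widehat{\lambda}(\omega) \mapsto \widehat{\lambda}(w^{-1}\omega)$, and the induced identification $A(K,w) \cong VN(K)_*$ via the pairing $(\;\cdot\;,\;\cdot\;)_w$. First I would record the following chain of equivalences. The map $\theta$ extends to a CB representation $\tilde\theta\colon A(K,w)\to\Lbb(\Hcal)$ if and only if it extends to a CB \emph{linear} map (multiplicativity of the extension is automatic, since $\Cf^\infty(K)$ is dense in $A(K,w)$ and multiplication is separately continuous). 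By the identification $A(K,w)\cong VN(K)_*$ together with the canonical isomorphism $CB(VN(K)_*,\Lbb(\Hcal)) \cong \Lbb(\Hcal)\overline{\otimes} VN(K)$ of \eqref{eq:the set of CB maps characterization}, such a CB linear extension exists if and only if there is an element $T \in \Lbb(\Hcal)\overline{\otimes}VN(K)$ with $(\id\otimes\, (\;\cdot\;,x)_w\text{-type pairing})(T)$ reproducing $\theta$ on the dense subalgebra; and one computes that this $T$ must be $v_\theta(1\otimes w^{-1})$.

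The second step is that computation. Running exactly the same calculation as in the displayed chain of equalities preceding Proposition~\ref{prop:CB extension condition} — but now pairing against $(\widehat{\lambda}(w^{-1}\omega_{f,g}),\;\cdot\;)_w = (\omega_{f,g},\;\cdot\;)$ and recalling $w^{-1}\omega_{f,g} = \omega_{f,w^{-1}g}$ — one gets, for $f,g\in\Cf^\infty(K)$,
\begin{align*}
(\id\otimes\omega_{f,g})\big(v_\theta(1\otimes w^{-1})\big)
&= (\id\otimes\, w^{-1}\omega_{f,g})(v_\theta)
= \theta\Big((\id\otimes\varphi)\big(\Delta(f)^*(1\otimes w^{-1}g)\big)\Big),
\end{align*}
which under the embedding \eqref{eq:Cinfty(K) embeds into A(K,W)} is precisely evaluation of $\theta$ at the image of $\widehat{\lambda}(\omega_{f,g})$ in $A(K,w)$. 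Hence if $v_\theta(1\otimes w^{-1}) \in \Lbb(\Hcal)\overline{\otimes}VN(K)$ then the corresponding CB map of $VN(K)_* \cong A(K,w)$ extends $\theta$; conversely, if $\theta$ extends CB-ly then $v_\theta(1\otimes w^{-1})$ is the element of $\Lbb(\Hcal)\overline{\otimes}VN(K)$ representing it, since the two agree on the weak-$*$ dense span of the $\omega_{f,g}$.

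Finally, translate the membership $v_\theta(1\otimes w^{-1}) \in \Lbb(\Hcal)\overline{\otimes}VN(K)$ into the supremum condition. Using $\Lbb(\Hcal)\overline{\otimes}VN(K) \cong l^\infty\text{-}\prod_{\mu}\Lbb(\Hcal)\overline{\otimes}\Lbb(V(\mu))$ as in \eqref{eq:supremum condition on v_varphi}, and the fact that $w$ is central so that its $\mu$-component acts on $\Lbb(\Hcal)\overline{\otimes}\Lbb(V(\mu))$ as the scalar $w(\mu)^{-1}$ times the identity, the $\mu$-component of $v_\theta(1\otimes w^{-1})$ is $w(\mu)^{-1}v_\theta^\mu$, with norm $\|v_\theta^\mu\|/w(\mu)$. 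Therefore the membership holds iff $\sup_{\mu}\|v_\theta^\mu\|/w(\mu) < \infty$, as claimed. I expect the only genuinely delicate point to be the automatic multiplicativity of the extension and making the identification $A(K,w)\cong VN(K)_*$ compatible with the operator space structures carefully enough that the CB-isomorphism $CB(VN(K)_*,\Lbb(\Hcal))\cong\Lbb(\Hcal)\overline{\otimes}VN(K)$ applies verbatim; everything else is a direct transcription of the argument already given for $A(K)$.
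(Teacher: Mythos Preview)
Your proposal is correct and follows essentially the same route as the paper's proof: both identify $A(K,w)\cong VN(K)_*$ via the pairing $(\,\cdot\,,\,\cdot\,)_w$, verify that under $CB(VN(K)_*,\Lbb(\Hcal))\cong\Lbb(\Hcal)\,\overline{\otimes}\,VN(K)$ the element representing $\theta$ is $v_\theta(1\otimes w^{-1})$ by the same slicing computation (the paper writes it as $((\id\otimes\omega_{f,g}),v_\theta(1\otimes w^{-1}))_w=((\id\otimes\omega_{f,g}),v_\theta)=\theta((\id\otimes\varphi)(\Delta(f)^*(1\otimes g)))$, you equivalently substitute $g\rightsquigarrow w^{-1}g$), and then invoke \eqref{eq:supremum condition on v_varphi}. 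Your write-up is simply more explicit about the automatic multiplicativity of the extension and about reading off the $\mu$-components; the paper compresses all of this into two lines.
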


\begin{proof}
By the above calculation we obtain, by slight abuse of notation,  
\begin{align*}
\big((\id \otimes \omega_{f,g}), v_\theta (1 \otimes w^{-1}) \big)_w = \big((\id \otimes w^{-1} \omega_{f,g} ), v_\theta)_w 
= ((\id \otimes \omega_{f,g}), v_\theta) \\ = \theta \Big((\id \otimes \varphi) \big(\Delta(f)^* (1\otimes g) \big) \Big) 
\end{align*}
for any $ f, g \in \Cf^\infty(K) $. 
Hence the claim follows from \eqref{eq:supremum condition on v_varphi}.
\end{proof}

\section{Previous works} \label{sec:previous works}

In this section, we briefly summarize some of the results of \cite{Ludwig2012} and \cite{Franz2021} which have relevance to this paper.

\subsection{The work of Ludwig, Spronk, and Turowska} \label{sec:the work of Ludwig et al.}

In \cite{Ludwig2012}, Ludwig, Spronk and Turowska considered central weights on the dual of a classical compact group $ K $. They found that for any such weight $ w $ one has dense inclusions of algebras
$$
\Cf^\infty(K) \subseteq A(K, w) \subseteq A(K).
$$
This implies
\begin{equation} \label{eq:inclusions of the spectra}
\Spec(A(K)) \subseteq \Spec(A(K,w)) \subseteq \Spec(\Cf^\infty(K)), 
\end{equation}
where $ \Spec(A) = \{\chi: A \rightarrow \Cbb \mid \chi \text{ is a nonzero algebra homomorphism} \} $ for an algebra $ A $, and the inclusions are provided via the 
obvious restriction maps.

By Eymard's theorem \cite[Th\'eor\`eme 3.34]{Eymard1964} we have $ \Spec(A(K)) \cong K $, and if $ K $ is a compact connected Lie group then $ \Spec(\Cf^\infty(K)) \cong K_\Cbb $ is the \textit{Chevalley complexification of $ K $}, 
compare \cite[Chapter VI]{Chevalley}. These identifications are provided by ``evaluation at the points of $ K $ and $ K_\Cbb $", respectively.
It follows that $ \Spec(A(K,w)) $ is placed between $ K $ and $ K_\Cbb $. 

In addition, the spectra $ \Spec(A(K,w)) $ exhaust $ K_\mathbb{C} $ as $ w $ ranges over all weights. More precisely, we have the following result, compare \cite[Section 5.1]{Ghandehari2021} for the case $ K = SU(N) $. 

\begin{prop} \label{prop:the main result 2 of Ludwig et al.}
Let $ K $ be a compact connected Lie group. Then 
\begin{equation} \label{eq:the main result 2 of Ludwig et al.}
K_\Cbb = \bigcup_{w} \Spec(A(K,w)),
\end{equation}
where the union is taken over all central weights on the dual of $ K $. 
\end{prop}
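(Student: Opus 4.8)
The plan is to prove the two inclusions $K_\Cbb \supseteq \bigcup_w \Spec(A(K,w))$ and $K_\Cbb \subseteq \bigcup_w \Spec(A(K,w))$ separately. The first is immediate from \eqref{eq:inclusions of the spectra}: every $\Spec(A(K,w))$ sits inside $\Spec(\Cf^\infty(K)) \cong K_\Cbb$, so the union does too. The content is entirely in the reverse inclusion, namely that for every point $z \in K_\Cbb$ there exists \emph{some} central weight $w$ on the dual of $K$ whose associated Beurling-Fourier algebra $A(K,w)$ admits a character whose restriction to $\Cf^\infty(K)$ is evaluation at $z$.

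First I would set up coordinates via the polar/Iwasawa-type decomposition of $K_\Cbb$: writing $z = k \exp(iX)$ with $k \in K$ and $X$ in the Lie algebra $\kf$, it suffices (after translating by the group element $k$, which is harmless since evaluation at $k$ is already a character of $A(K)$ hence of every $A(K,w)$, and characters compose with translation) to handle $z = \exp(iX)$. Identifying $\Cf^\infty(K)$ with $\bigoplus_{\mu} \End(V(\mu))^*$, evaluation at $\exp(iX)$ is the functional $u^\mu_{ij} \mapsto \la e^\mu_i, \mu(\exp(iX)) e^\mu_j\ra$, where $\mu(\exp(iX)) = \exp(i\,d\mu(X)) \in \Lbb(V(\mu))$. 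The key quantitative point is that this is a $1$-dimensional representation $\theta_z$ of $\Cf^\infty(K)$, so $v_{\theta_z}^\mu$ from \eqref{eq:definition of v_varphi} is the operator $\mu(\exp(iX))^t$ (the corepresentation matrix evaluated at $z$) acting on $V(\mu)$, and one has the norm estimate $\|v_{\theta_z}^\mu\| = \|\mu(\exp(iX))\| = \exp(\|d\mu(X)\|_{\mathrm{op}})$, which grows at most like $\exp(c_X \cdot |\mu|)$ for a suitable ``size'' $|\mu|$ of the highest weight of $\mu$ (e.g.\ $\max_{\text{weights }\lambda\text{ of }\mu} |\lambda(X)|$, bounded by $|\mu| \cdot \|X\|$ in any fixed norm on weight space).

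With this in hand I would \emph{construct} the weight: set $w(\mu) = \exp((c_X + 1)\,\ell(\mu))$ where $\ell \colon \Irr(K) \to [0,\infty)$ is a length function, i.e.\ $\ell(\epsilon) = 0$, $\ell(\bar\mu) = \ell(\mu)$, and $\ell(\nu) \le \ell(\lambda) + \ell(\mu)$ whenever $\nu \subseteq \lambda \otimes \mu$ — for instance the word-length with respect to the finite set of fundamental representations, which dominates the linear size function $|\mu|$ up to a constant. By Proposition~\ref{prop:central weight on the dual} such a $w$ is a (symmetric) central weight on the dual of $K$: condition Z1 holds since $\ell \ge 0$, and Z2 is exactly subadditivity of $\ell$ exponentiated. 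Then $\sup_\mu \|v_{\theta_z}^\mu\|/w(\mu) \le \sup_\mu \exp(c_X|\mu| - (c_X+1)\ell(\mu)) < \infty$ by the comparison $|\mu| \lesssim \ell(\mu)$ (after possibly enlarging the constant), so Proposition~\ref{prop:CB extension condition} gives that $\theta_z$ extends to a completely bounded — in particular bounded — algebra homomorphism $A(K,w) \to \Cbb$, i.e.\ a character of $A(K,w)$ restricting to evaluation at $z$ on $\Cf^\infty(K)$. Translating back by $k$ finishes the argument.

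The main obstacle is the second step: making precise the norm growth estimate $\|v_{\theta_z}^\mu\| = \|\mu(z)\| \le \exp(c_X \ell(\mu))$ and the comparison between the linear weight-space size function and a genuine length function on $\Irr(K)$. This requires a little Lie theory — controlling the operator norm of $\exp(i\,d\mu(X))$ uniformly over $\mu$ via the weights of $\mu$, and knowing that highest weights of irreducibles occurring in an $n$-fold tensor product of fundamentals are bounded linearly in $n$ — but all of it is standard structure theory of compact Lie groups and their complexifications; the Beurling-Fourier side is then a direct application of Proposition~\ref{prop:CB extension condition}. One should also check that $z \mapsto \theta_z$ being genuinely $\Cbb$-valued (not just $\Lbb(\Hcal)$-valued) is preserved under the extension, which is automatic since $\Cf^\infty(K)$ is dense in $A(K,w)$ and a bounded algebra homomorphism to $\Cbb$ extending a $\Cbb$-valued one is unique and $\Cbb$-valued.
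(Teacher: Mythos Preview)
Your argument is correct, but the route is genuinely different from the paper's. The paper never invokes Proposition~\ref{prop:CB extension condition}; instead it builds the Casimir--based central weights $w_\beta^{LST}(\mu)=e^{\beta c(\mu)^{1/2}}$ (using the subadditivity of $c(\cdot)^{1/2}$ from \cite{Ludwig2012}), compares them to the \emph{non-central} abelian weights $w_\beta^H$ of \cite{Lee_Beurling-Fourier}, imports the explicit description $\Spec(A(K,w_\beta^H))=\{s\exp(iX):s\in K,\ X\in\hf,\ |X|_\infty\le\beta\}$ from that reference, and then uses a weight-comparison principle to conclude $\Spec(A(K,w_\beta^H))\subseteq\Spec(A(K,w_{\sqrt{k}\beta}^{LST}))$. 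Your approach bypasses both external citations: you estimate $\|v_{\theta_z}^\mu\|=\|\mu(z)\|$ directly and manufacture a central weight dominating it, which is exactly the mechanism the paper itself deploys in Section~\ref{sec:the generalisation} for the quantum case. What the paper's proof buys is an identification of a single explicit one-parameter family $\{w_\beta^{LST}\}_{\beta\ge 0}$ of central weights that already exhausts $K_\Cbb$; what your proof buys is self-containment and a cleaner conceptual line (no detour through non-central weights, no appeal to results outside the paper).

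Two small points worth tightening. First, the equality $\|\mu(\exp(iX))\|=\exp(\|d\mu(X)\|_{\mathrm{op}})$ is in general only an inequality $\le$ (the largest eigenvalue of the Hermitian operator $i\,d\mu(X)$ need not equal its norm unless the spectrum is symmetric about $0$), but you only need the upper bound. Second, the phrase ``characters compose with translation'' is vague; the clean justification is that $v_{\theta_{kz}}^\mu=\mu(k)\,v_{\theta_z}^\mu$ with $\mu(k)$ unitary, so $\|v_{\theta_{kz}}^\mu\|=\|v_{\theta_z}^\mu\|$ --- this is precisely the content of Lemma~\ref{lem:simplification}/Corollary~\ref{cor:simplification} specialised to the classical case.
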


\begin{proof}
Firstly, introduce an $ \Ad $-invariant inner product $ (\cdot, \cdot): \kf \times \kf \rightarrow \Rbb $ and let $ \{X_1, \dots, X_d \} \subseteq \kf $ be an orthonormal basis of $ \kf $ with respect to this inner product. Each $ \pi \in \Irr(K) $ induces a Lie algebra representation
$$
\kf \ni X \longmapsto \left. \frac{d}{dt} \right|_{t=0} \pi (\exp (tX)) \in \End(V(\pi)),
$$
which, by slight abuse of notation, we also denote by $ \pi: \kf \rightarrow \End(V(\pi)) $. 
The Casimir operator
$$
\pi(X_1)^2 + \cdots + \pi(X_d)^2 \in \End(V(\pi))
$$
does not depend on the choice of the orthonormal basis and is always a negative scalar matrix. If we write 
$$
\pi(X_1)^2 + \cdots + \pi(X_d)^2 = - c(\pi) \id_{V(\pi)},
$$
then for all $ \lambda, \mu, \nu \in \Irr(K) $ with $ \nu \subseteq \lambda \otimes \mu $ we have  
$$
c(\nu)^{\frac{1}{2}} \leq c(\lambda)^{\frac{1}{2}} + c(\mu)^{\frac{1}{2}}, 
$$
according to \cite[Lemma~5.3]{Ludwig2012}. As a consequence, Proposition~\ref{prop:central weight on the dual} shows that, for each $ \beta \geq 0 $, the function $ w_\beta^{LST}: \Irr(K) \rightarrow (0, \infty) $ given by
$$
w_\beta^{LST}(\mu) = e^{\beta c(\mu)^{\frac{1}{2}}}
$$
defines a central weight $ w_\beta^{LST} = (w_\beta^{LST} (\mu))_{\mu \in \Irr(K)} $ on the dual of $ K $.

Let $ H \leq K $ be a closed abelian subgroup with Lie algebra $ \hf \leq \kf $. Since the values $ c(\mu) $ for $\mu \in \Irr(K) $ do not depend on the choice of the orthonormal basis we may assume that $ \{X_1, \dots, X_k \} $ is a basis of $ \hf $. For each $ \beta > 0 $, 
$$
w_\beta^{H} = \Big( e^{\beta (|\pi(X_1)| + \cdots + |\pi(X_k)|) } \Big)_{\pi \in \Irr(K)} \in \prod_{\pi \in \Irr(K)} \End(V(\pi))
$$
is a weight on the dual of $ K $ of the form considered in \cite[Eq.~(2.8)]{Lee_Beurling-Fourier}. If we regard $ \Spec(A(K, w_\beta^{H})) $ as a subset of $ \Spec(\Cf^\infty(K)) \cong K_\Cbb $ via \eqref{eq:inclusions of the spectra}, it then follows from  \cite[Theorem~2.11]{Lee_Beurling-Fourier} that we have
\begin{equation} \label{eq:calculation of spectra-LL}
\Spec(A(K, w_\beta^{H})) = \{ s \exp(iX) \mid s \in K, \; X \in \hf, \; |X|_{\infty} \leq \beta \},
\end{equation}
where $ |X|_{\infty} = \max_{1 \leq j \leq k} |x_j| $ for $ X = x_1 X_1 + \cdots + x_k X_k \in \hf $.

Let $ \pi \in \Irr(K) $. Since $ \{\pi(X_1), \dots, \pi(X_d)\} \subseteq \End(V(\pi)) $ are skew-adjoint operators and $ \pi(X_i) $ and $ \pi(X_j) $ mutually commute for $ 1 \leq i,j \leq k $, we have
$$
|\pi(X_1)| + \cdots + | \pi(X_k) | \leq \sqrt{k} \; \big( - \pi(X_1)^2 - \cdots - \pi(X_k)^2 \big)^{\frac{1}{2}}
\leq \sqrt{k} \; c(\pi)^{\frac{1}{2}} \id_{V(\pi)}.
$$
Hence
$$
\id_{V(\pi)} \leq e^{ \sqrt{k} \; c(\pi)^{\frac{1}{2}} \id_{V(\pi)} - ( |\pi (X_1)| + \cdots + | \pi (X_k)| )} = e^{ \sqrt{k} \; c(\pi)^{\frac{1}{2}}} e^{ - (| \pi (X_1) | + \cdots + | \pi (X_k) | ) }
$$
in $ \End(V(\pi)) $. This implies that, for all $ \beta \geq 0 $,
$$
(w_{\sqrt{k}\beta}^{LST}) ^{-1} = \Big( e^{-\beta \sqrt{k}\; c(\pi)^{\frac{1}{2}} } \Big)_{\pi \in \Irr(K)} \leq \Big(e^{-\beta ( |\pi(X_1)| + \cdots + | \pi(X_k) | ) } \Big)_{\pi \in \Irr(K)} = (w_\beta^{H})^{-1}
$$ 
as elements of $ VN(K) $. Since $ w_\beta^{LST} $ is central, these two weights strongly commute, which enables us to apply \cite[Proposition~3.7]{Lee_Beurling-Fourier} to conclude
$$
\Spec(A(K, w_{\sqrt{k}\beta}^{LST})) \supseteq \Spec(A(K, w_\beta^{H})).
$$
Together with \eqref{eq:calculation of spectra-LL} this implies
$$
K_\Cbb \supseteq \bigcup_{\beta \geq 0} \Spec(A(K,w_\beta^{LST})) \supseteq \{s \exp(iX) \mid s \in K, \; X \in \kf \} = K_\Cbb,
$$
keeping in mind that $ H \leq K $ was an arbitrary closed abelian subgroup of $ K $. This completes the proof.
\end{proof}

As a consequence, the Beurling-Fourier algebras $ A(K,w) $ can be used to ``detect" the complexification of the compact Lie group  $ K $. 
In this setting, the weight $ w $ plays the role of a parameter specifying the ``fattening" of the original group $ K $ inside its complexification $ K_\Cbb $.

\subsection{The work of Franz and Lee} \label{sec:the work of Franz and Lee}

In the later work \cite{Franz2021}, Franz and Lee generalised these considerations to the realm of compact quantum groups. The dense inclusions of algebras
$$
\Cf^\infty(K) \subseteq A(K,w) \subseteq A(K)
$$
still hold in this case by \eqref{eq:Cinfty(K) embeds into A(K,W)}. However, as these algebras are noncommutative in the quantum case, the associated inclusions
$$
\Spec(A(K)) \subseteq \Spec(A(K,w)) \subseteq \Spec(\Cf^\infty(K))
$$
contain only relatively little information about the ``complexification" of $ K $.

It is shown in \cite{Franz2021} that $ \Spec(A(K)) $ is homeomorphic to the maximal classical subgroup $ \tilde{K} $ of $ K $, and $ \Spec(\Cf^\infty(K)) $ is the ``complexification" of $ \tilde{K} $ in the sense of Mckennon \cite{McKennon1979}. Thus, the best that $ \Spec(A(K,w)) $ can do is to give information about the complexification 
of the classical subgroup $ \tilde{K} $ of $ K $. As already pointed out above, $ \tilde{K} $ is typically relatively small. For instance, if $ K $ is the $ q $-deformation of a compact semisimple group, then $ \tilde{K} $ identifies with the maximal torus of $ K $. Still, it is shown 
in \cite{Franz2021} that $ \Spec(A(K,w)) $ can be used to detect the complexification of $ \tilde{K} $ for the free orthogonal and unitary quantum groups $ K = O^+ _F, U^+ _F $, and the quantum permutation group $ S_n ^+ $.

Franz and Lee pushed this program further in the case $ K = SU_q(2) $, and considered more general representations of $ \Cf^\infty(K) $ instead of only the one-dimensional representations that constitute $ \Spec(\Cf^\infty(K)) $.
Let us review some of the ingredients in this analysis, referring to subsection \ref{sec:the q-deformation of compact semisimple Lie groups} below for more details. 

Fix $ -1 < q < 1 $ such that $ q \neq 0 $. 
Let $ \Cf_c^\infty(SL_q(2, \Cbb)) $ be the algebraic quantum group underlying the quantum Lorentz group $ SL_q (2, \Cbb) $ 
introduced in \cite{Podles1990}, 
given by 
$$
\Cf_c^\infty(SL_q (2, \Cbb)) \cong \Cf^\infty(SU_q(2)) \otimes \Dcal(SU_q(2)), 
$$
where $ \Dcal(SU_q(2)) $ is the algebraic quantum group dual to $ \Cf^\infty(SU_q(2)) $, see \cite{VoigtYuncken}. 
If $ \SSp(A) $ denotes the set of equivalence classes of irreducible $ * $-representations 
of a $ * $-algebra $ A $ on Hilbert spaces then we have
\begin{equation*} \label{eq:sp Ccinfty(SLq(2,C))}
\SSp(\Cf_c^\infty (SL_q (2, \Cbb))) \cong \SSp(\Cf^\infty(SU_q(2))) \times \SSp(\Dcal(SU_q(2))),
\end{equation*}
given by the following correspondence
$$ 
(\pi_c, \pi_s) \longmapsto \pi_c \otimes \pi_s. 
$$
Here, $ \pi_s \in \SSp(\Dcal(SU_q(2))) $ is the $ * $-representation corresponding to $ s \in \frac{1}{2}\Nbb $, see \cite[Section~3.5 and Section~3.10]{VoigtYuncken}. Denote its representation space by $V(s)$.

Also, $ \Cf^\infty(SU_q(2)) $ admits an algebra embedding
$$
\Cf^\infty(SU_q(2)) \ni f \longmapsto i (f \bowtie 1) \in \Cf^\infty(SL_q(2, \Cbb)),
$$
where $ \Cf^\infty(SL_q(2, \Cbb)) = \prod_{s \in \frac{1}{2}\Nbb} \Cf^\infty(SU(2)) \otimes \End(V(s)) $ is the algebraic multiplier algebra of $ \Cf_c^\infty (SL_q (2, \Cbb)) $ 
and $ i: \Cf^\infty(SU_q(2)) \bowtie \Cf^\infty(SU_q(2)) \rightarrow \Cf^\infty(SL_q(2, \Cbb)) $ 
is the map of \cite[Proposition~4.24]{VoigtYuncken}. Let us point out that this embedding of $ \Cf^\infty(SU_q(2)) $ is not compatible with the $ * $-structures, and that it should be viewed as embedding holomorphic polynomial functions on $ SL_q(2, \Cbb) $ into the algebra of all smooth functions. 

Note that every $ \pi \in \SSp(\Cf_c^\infty(SL_q(2, \Cbb)) $ factorises through $ \Cf^\infty(SU_q(2)) \otimes \Lbb(V(s)) $ for some $ s \in \frac{1}{2} \Nbb $. 
It follows that $ \pi: \Cf^\infty_c(SL_q(2, \Cbb)) \rightarrow \Lbb(\Hcal_\pi) $ admits an extension $ \tilde{\pi}: \Cf^\infty(SL_q(2, \Cbb)) \rightarrow \Lbb(\Hcal_\pi) $, 
and we get an algebra homomorphism
\begin{equation*} \label{eq:representation of Cinfty(SUq(2))}
\theta_\pi: \Cf^\infty(SU_q(2)) \xrightarrow{i} \Cf^\infty(SL_q(2, \Cbb)) \xrightarrow{\tilde{\pi}} \Lbb(\Hcal_\pi).
\end{equation*}
Now, fix $ \beta \geq 1 $ and let $ w^{FL}_\beta = (w^{FL}_\beta(s))_{s \in \frac{1}{2} \Nbb} $ be the weight on the dual of $ SU_q(2) $ defined by
\begin{equation} \label{eq:weight on the dual of SUq(2)}
w^{FL}_\beta(s) = \beta^{2s}, \quad s \in \frac{1}{2} \Nbb.
\end{equation}
Then one can ask which representations of the form $ \theta_\pi: \Cf^\infty(SU_q(2)) \rightarrow \Lbb(\Hcal_\pi) $ extend to CB algebra 
homomorphisms $ \tilde{\theta}_\pi: A(SU_q(2), w^{FL}_\beta) \rightarrow \Lbb(\Hcal_\pi) $, and Franz and Lee found the following criterion \cite[Theorem 5.6]{Franz2021}. 

\begin{thm} \label{thm:the main result of Franz and Lee}
Let $ \pi = \pi_c \otimes \pi_s \in \SSp(\Cf^\infty (SL_q(2, \Cbb))) $ with $ s \in \frac{1}{2}\Nbb $. Then $ \theta_\pi: \Cf^\infty(SU_q(2)) \rightarrow \Lbb(\Hcal_\pi) $ 
extends to a completely bounded algebra homomorphism $ A(SU_q(2), w^{FL}_\beta) \rightarrow \Lbb(\Hcal_\pi) $ if and only if
\begin{equation*} \label{eq:the main result 2 of Franz and Lee-estimate}
|q|^{-s} \leq \beta.
\end{equation*}
Therefore,
\begin{align*}\label{eq:the main result 2 of Franz and Lee-spectrum}
&\SSp(\Cf_c^\infty (SL_q(2, \Cbb))) \\
&= \bigcup_{\beta \geq 1} \bigg\{\pi \in \SSp(\Cf^\infty(SL_q (2, \Cbb))) \mid \theta_\pi \text{ admits a CB extension to } A(SU_q(2), w^{FL}_\beta) \bigg\}, \nonumber 
\end{align*}
and each subset of the union contains all representations of the form $ \pi = \pi_c \otimes \widehat{\epsilon} $ for $ \pi_c \in \SSp(\Cf^\infty(SU_q(2))) $, where $ \widehat{\epsilon} $ denotes the counit of $ \Dcal(SU_q(2)) $. 
\end{thm}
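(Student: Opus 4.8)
The starting point is Proposition~\ref{prop:CB extension condition}, applied to the compact quantum group $ K = SU_q(2) $. Since every irreducible representation of $ SU_q(2) $ is self-conjugate, Proposition~\ref{prop:central weight on the dual} shows that for each $ \beta \geq 1 $ the function $ t \mapsto \beta^{2t} $ on $ \Irr(SU_q(2)) = \frac{1}{2}\Nbb $ is a symmetric central weight on the dual of $ SU_q(2) $, namely the weight $ w^{FL}_\beta $ of \eqref{eq:weight on the dual of SUq(2)}. Proposition~\ref{prop:CB extension condition} then says that $ \theta_\pi $ extends to a completely bounded algebra homomorphism $ A(SU_q(2), w^{FL}_\beta) \to \Lbb(\Hcal_\pi) $ if and only if
\[
\sup_{t \in \frac{1}{2}\Nbb} \frac{\|v_{\theta_\pi}^t\|}{\beta^{2t}} < \infty .
\]
Unwinding the matrix-unit structure of the $ \omega^t_{ij} $, the block $ v_{\theta_\pi}^t = \sum_{i,j} \theta_\pi(u^t_{ij}) \otimes \omega^t_{ij} $ is precisely the image $ (\theta_\pi \otimes \id)(u^t) \in \Lbb(\Hcal_\pi) \otimes \End(V(t)) $ of the spin-$ t $ corepresentation matrix of $ SU_q(2) $ under $ \theta_\pi $. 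So the theorem reduces to pinning down the exponential growth of $ \|(\theta_\pi \otimes \id)(u^t)\| $ as $ t \to \infty $, and more precisely to showing that this quantity is bounded above and below by fixed positive multiples of $ |q|^{-2ts} $.

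To control this norm I would first pass to the fundamental corepresentation $ u^{1/2} $. Since $ \theta_\pi $ is an algebra homomorphism and $ u^t $ occurs with multiplicity one as a direct summand of the $ 2t $-fold tensor power of $ u^{1/2} $, applying $ \theta_\pi \otimes \id $ to the corresponding corepresentation identity exhibits $ (\theta_\pi \otimes \id)(u^t) $ as the compression, along the isometric intertwiner $ P \colon V(t) \hookrightarrow V(1/2)^{\otimes 2t} $, of the product $ T^{(1)} T^{(2)} \cdots T^{(2t)} $, where $ T := (\theta_\pi \otimes \id)(u^{1/2}) \in \Lbb(\Hcal_\pi) \otimes \End(V(1/2)) $ and $ T^{(k)} $ denotes $ T $ acting on $ \Hcal_\pi $ and on the $ k $-th tensor factor of $ V(1/2)^{\otimes 2t} $. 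This already gives $ \|(\theta_\pi \otimes \id)(u^t)\| \leq \|T\|^{2t} $, but the sharp estimate needs the compression to be evaluated explicitly. For that I would compute $ T $ from the standard generators $ \alpha, \gamma $ of $ \Cf^\infty(SU_q(2)) $ and its fundamental $ 2 \times 2 $ corepresentation matrix, the holomorphic embedding $ i(\;\cdot\; \bowtie 1) $ into the Drinfeld double, and the standard description of the irreducible $ * $-representation $ \pi_s $ of $ \Dcal(SU_q(2)) $ on $ V(s) $, which carries a weight-vector basis on which a distinguished group-like element acts by the scalars $ q^{2m} $, $ m = -s, \dots, s $. The upshot is that, up to an isometric factor coming from the genuine $ * $-representation $ \pi_c $ of $ \Cf^\infty(SU_q(2)) $, the operator $ T $ is triangular with diagonal entries carrying the scalings $ q^{\mp s} $; running the highest-weight line of $ V(t) $ inside $ V(1/2)^{\otimes 2t} $ through the product of $ 2t $ such entries produces a factor of exact order $ |q|^{-2ts} $, and bounding all matrix entries of the $ V(t) $-compression in the same way yields $ \|(\theta_\pi \otimes \id)(u^t)\| \leq C\,|q|^{-2ts} $ with $ C $ independent of $ t $.

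Granting these two-sided bounds, $ \|v_{\theta_\pi}^t\|/\beta^{2t} $ is comparable to $ \big(|q|^{-s}/\beta\big)^{2t} $, whose supremum over $ t \in \frac{1}{2}\Nbb $ is finite precisely when $ |q|^{-s} \leq \beta $; this is the asserted criterion. For the concluding equality of $ * $-spectra, recall that every $ \pi \in \SSp(\Cf_c^\infty(SL_q(2,\Cbb))) $ has the form $ \pi_c \otimes \pi_s $ for a unique $ s \in \frac{1}{2}\Nbb $, and that $ -1 < q < 1 $ forces $ |q|^{-s} \in [1,\infty) $; hence such a $ \pi $ lies in the $ \beta $-th member of the union as soon as $ \beta \geq |q|^{-s} $, which gives the displayed equality. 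In the special case $ s = 0 $, i.e. $ \pi = \pi_c \otimes \widehat{\epsilon} $ with $ |q|^{-s} = 1 $, the criterion holds for every $ \beta \geq 1 $, so $ \pi $ lies in every member of the union.

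The hard part is the middle paragraph: extracting the explicit form of $ T $ from the interplay between $ \Cf^\infty(SU_q(2)) $, its non-involutive holomorphic embedding into the Drinfeld double, and the structure of the $ \pi_s $, and then refining the extreme-weight computation into two-sided bounds sharp enough to land on the boundary value $ |q|^{-s} = \beta $ rather than a strict inequality --- that is, verifying that the subexponential corrections to $ |q|^{-2ts} $ are genuinely bounded in $ t $ and not merely polynomial.
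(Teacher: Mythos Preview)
Your outline is sound in spirit, and the reduction via Proposition~\ref{prop:CB extension condition} is exactly right, but the ``hard part'' you flag is both real and unnecessary. The paper bypasses the tensor-power combinatorics entirely. It first factors $v_\pi = v_{\pi_c \otimes \widehat{\epsilon}_s}\, v_{\epsilon_c \otimes \pi_s}$ (Lemma~\ref{lem:simplification}), where the first factor is unitary because $\pi_c$ is a $*$-representation; this is your ``isometric factor'' made precise, and it holds for every $\mu$, not only $\mu = 1/2$. The remaining factor is then identified with $(\pi_\mu \otimes \pi_s)(R^{-1})$ via the $l^-$-functionals (Lemma~\ref{lem:computation of v_mu} and the proof of Lemma~\ref{lem:estimates on I(u_mu)}), and since $R_{21} = R^*$ one gets $\|v^\mu_{\epsilon_c \otimes \pi_s}\|^2 = \|(\pi_\mu \otimes \pi_s)(R_{21}R)^{-1}\|$. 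The operator $R_{21}R$ acts on each irreducible summand $V(\nu) \subseteq V(\mu) \otimes V(s)$ by the known scalar $q^{(\nu,\nu+2\rho) - (\mu,\mu+2\rho) - (s,s+2\rho)}$, and maximizing over $\nu$ gives \emph{exactly} $\|v^\mu_{\epsilon_c \otimes \pi_s}\| = q^{-(s,\mu)}$. For $SU_q(2)$ with the normalization $(s,t) = 2st$ this is $q^{-2st}$ on the nose, so $\|v^t_{\theta_\pi}\|/\beta^{2t} = (q^{-s}/\beta)^{2t}$ with no constants to track, and the boundary case $|q|^{-s} = \beta$ falls out immediately.

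In short, the paper replaces your inductive estimate on compressions of $T^{(1)}\cdots T^{(2t)}$ by a one-shot exact computation using the universal $R$-matrix and the Drinfeld--Reshetikhin eigenvalue formula for $R_{21}R$. This both generalizes to arbitrary $K_q$ (Theorem~\ref{thm:the main result}) and eliminates the subexponential corrections you were worried about. Your approach is essentially that of the original Franz--Lee argument; the paper's point (see Remark~\ref{rmk:comparison with the work of Franz}) is precisely that the $R$-matrix route sharpens their constant to $1$ and removes the need for any asymptotic bookkeeping.
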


This result is a quantum analogue of Proposition \ref{prop:the main result 2 of Ludwig et al.} if we regard $ \SSp(A) $ as the ``noncommutative space" which represents the $ * $-algebra $ A $.

\section{Beurling-Fourier algebras of \texorpdfstring{$ q $}{TEXT}-deformations} \label{sec:the generalisation}

In this section we study Beurling-Fourier algebras associated with $ q $-deformations of general compact semisimple Lie groups, and we generalise the main results of \cite{Franz2021} to this setting.

\subsection{The \texorpdfstring{$ q $}{TEXT}-deformation of compact semisimple Lie groups} \label{sec:the q-deformation of compact semisimple Lie groups}

Throughout this section, we adopt the notation of \cite{VoigtYuncken} and freely use the results therein.

Fix $ 0 < q < 1 $. Let $ K $ be a simply-connected compact semisimple Lie group and let $ G = K_\mathbb{C} $ be the complexification of $ K $. 
If $ \kf $ and $ \gf $ denote the respective Lie algebras then $ G $ is the simply-connected semisimple Lie group with Lie algebra $ \mathfrak{g} = \mathfrak{k}_\mathbb{C} $. Fix a maximal torus $ T \subseteq K $ with Lie algebra $ \tf \subseteq \kf $ and a set $ \Sigma = \{\alpha_1, \dots, \alpha_N\} \subseteq (i \tf)^* $ of simple roots with respect to the Cartan subalgebra $ \hf = \tf \otimes \Cbb $ of $ \gf $. We write $ (\cdot,\cdot) $ for the bilinear form on $ \mathfrak{h}^* $ obtained by rescaling the Killing form such that the shortest root $ \alpha $ satisfies $ (\alpha, \alpha) = 2 $. Let $ \{\varpi_1, \dots, \varpi_N\} $ be the set of fundamental weights. Moreover write $ \roots $ and $ \weights $ for the abelian subgroups of $ (i \tf)^* $ 
generated by $ \{\alpha_1, \dots, \alpha_N \}$ and $ \{\varpi_1, \dots, \varpi_N\} $, respectively. The set $ \weights^+ $ of dominant integral weights is the subset of $ \weights $
consisting of all non-negative integer linear combinations of $ \{\varpi_1, \dots, \varpi_N \} $.

Let $ U_q^\Rbb(\kf) $ be the quantized universal enveloping algebra $ U_q(\gf) $ of $ \gf $ equipped with the $ * $-structure corresponding to the compact real form $ \kf $ 
as in \cite[Chapter 4]{VoigtYuncken}. We denote the standard generators of this algebra by
$$
K_\lambda , \;\; E_i , \;\; F_i, \;\; \lambda \in \weights, \; 1\leq i \leq N.
$$
There is a one-to-one correspondence between $ \weights^+ $ and the set of equivalence classes of irreducible integrable $ * $-representations of $ U_q^\Rbb(\kf) $. 
Denoting the representation corresponding to $ \mu \in \weights^+ $ by $ (\pi_\mu, V(\mu)) $, we have 
\begin{equation*} \label{eq:definition of Ccinfty(Kq)}
\Cf^\infty(K_q) = \bigoplus_{\mu \in \weights^+} \End (V(\mu))^*,
\end{equation*}
and the Hopf algebra $ \Cf^\infty(K_q) $ can be equipped with a compact quantum group structure such that the natural pairing
\begin{equation} \label{eq:skew-pairing of Kq}
(\cdot, \cdot): U_q^\Rbb(\kf) \times \Cf^\infty(K_q) \ni (X, (f_\mu)_{\mu \in \weights^+}) \longmapsto \sum_{\mu \in \weights^+} f_\mu (\pi_\mu(X)) \in \Cbb
\end{equation}
becomes a nondegenerate skew-pairing of Hopf $ * $-algebras. The algebra $ \Cf^\infty(K_q) $ is called \textit{the quantized algebra of functions on $ K $}. 
For each $ \mu \in \weights^+ $, fix an orthonormal basis $ (e_i ^\mu )_{1 \leq i \leq n_\mu} $ of $ V(\mu) $ consisting of weight vectors and set
\begin{equation*} \label{eq:basis of Ccinfty(Kq)}
u^\mu _{ij} = \la e_i^\mu | \cdot | e_j ^\mu \ra \in \End (V(\mu))^*, \quad \mu \in \weights^+, \; 1 \leq i,j \leq n_\mu, 
\end{equation*}
where $ n_\mu $ is the dimension of $ V(\mu) $. Then $ (u^\mu _{ij})_{\mu \in \weights^+, 1 \leq i,j \leq n_\mu} $ is a linear basis of $ \Cf^\infty(K_q) $ and the 
matrix $ (u^\mu_{ij})_{1 \leq i,j \leq n_\mu} \in M_{n_\mu}(\Cf^\infty (K_q)) $ is a unitary corepresentation of the Hopf $ * $-algebra $ \Cf^\infty(K_q) $ for 
each $ \mu \in \weights^+ $.

Let $ \Dcal(K_q) $ be the algebraic quantum group dual to $ \Cf^\infty(K_q) $ in the sense of \cite{vanDaele}. Then, as a $ * $-algebra,
\begin{equation*} \label{eq:definition of D(Kq)}
\Dcal(K_q) = \bigoplus_{\mu \in \weights^+} \End(V(\mu)).
\end{equation*}
It follows that we have $ \SSp(\Dcal(K_q)) \cong \weights^+ $ in such a way that $\mu \in \weights^+ $ corresponds to the $ * $-representation $ \pi_\mu $ given by the projection onto the $ \mu $-component. The algebraic multiplier algebra of $ \Dcal(K_q) $ is given by
\begin{equation*} \label{eq:multiplier algebra of D(Kq)}
\Mcal(\Dcal(K_q)) = \prod_{\mu \in \weights^+} \End(V(\mu)), 
\end{equation*}
and $ U_q^\Rbb(\kf) $ embeds into this algebra via the map
\begin{equation*} \label{eq:embedding of Uq(k) into M(D(Kq))}
U_q^\Rbb(\kf) \ni X \longmapsto (\pi_\mu(X))_{\mu} \in \prod_{\mu \in \weights^+} \End(V(\mu)).
\end{equation*}
In particular, the canonical pairing $ \Mcal(\Dcal(K_q)) \otimes \Cf^\infty(K_q) \rightarrow \Cbb $ is a nondegenerate skew-pairing that extends \eqref{eq:skew-pairing of Kq}. 
Accordingly, we also denote it by $ (\cdot, \cdot) $.

Let $ R \in \Mcal(\Dcal(K_q) \otimes \Dcal(K_q)) = \prod_{\lambda, \mu \in \weights^+} \End(V(\lambda)) \otimes \End(V(\mu))$ be the universal $ R $-matrix of $ U_q(\gf) $. We consider the 
$ l $-functionals 
$$
l^\pm: \Cf^\infty(K_q) \rightarrow U_q^\Rbb(\kf)
$$
defined by 
\begin{equation*}\label{eq:definition of l functions}
(l^+(f), g) = (R, g \otimes f), \qquad
(l^-(f), g) = (R^{-1}, f \otimes g).
\end{equation*}
These are Hopf algebra homomorphisms such that
\begin{equation} \label{eq:l functions and involution}
l^\pm(f)^* = l^\mp(f^*), \quad f \in \Cf^\infty(K_q).
\end{equation}

\subsection{The \texorpdfstring{$ q $}{TEXT}-deformation of complex semisimple Lie groups}\label{sec:the q-deformation of complex semisimple Lie groups}

Recall that $ G $ denotes the complexification of the compact semisimple Lie group $ K $. The $ * $-algebra
\begin{equation*} \label{eq:definition of Ccinfty(Gq)}
\Cf_c^\infty(G_q) = \Cf^\infty(K_q) \otimes \Dcal(K_q)
\end{equation*}
can be equipped with an algebraic quantum group structure, representing the \textit{quantum double} of $ K_q $. This algebra is called \textit{the quantized algebra of functions on $ G $}. We have
\begin{equation*} \label{eq:sp Ccinfty(Gq)}
\SSp(\Cf_c^\infty(G_q)) \cong \SSp(\Cf^\infty(K_q)) \times \SSp(\Dcal(K_q)) 
\end{equation*}
via the correspondence
$$
(\pi_c, \pi_\lambda) \longmapsto \pi_c \otimes \pi_\lambda,
$$
where $ \pi_\lambda \in \SSp(\Dcal(K_q)) $ is the $ * $-representation corresponding to $ \lambda \in \weights^+$. Let us note that the irreducible $ * $-representations of the quantized function algebra $ \Cf^\infty(K_q) $ can be described explicitly as well \cite{KorogodskiSoibelman}. 

From the above description it is not immediately apparent why the locally compact quantum group $ G_q $ should be viewed as a complexification of $ K_q $. The rationale comes from deformation theory and the quantum duality principle, see \cite{Drinfeld}, \cite{Gavarini}. More specifically, the $ * $-algebra $ \Dcal(K_q) $ should be interpreted as the algebra of functions $ \Cf_c^\infty(AN_q) $ of the quantisation of $ AN \subset G $, where $ G = KAN $ is the Iwasawa decomposition of $ G $. On the semi-classical level this is reflected by the fact that, as a Poisson Lie group, the group $ G $ is the \textit{classical double} of $ K $, equipped with its standard Poisson structure, see \cite{KorogodskiSoibelman}. 
Note that the Poisson dual of $ K $ is $ AN $. 

Let $ \Cf^\infty(G_q) = \Mcal(\Cf _c ^\infty(G_q)) = \prod_{\mu \in \weights^+} \Cf^\infty(K_q) \otimes \End(V(\mu)) $ be the algebraic multiplier algebra of $ \Cf_c^\infty(G_q) $. Then $ \Cf^\infty(K_q) $ embeds into $ \Cf^\infty(G_q) $ via
\begin{equation*} \label{eq:embedding of Cinfty(Kq) into Cinfty(Gq)}
i: \Cf^\infty(K_q) \ni f \longmapsto f_{(1)} \otimes l^-(f_{(2)}) \in \Cf^\infty(G_q).
\end{equation*}
Let us note that $ i $ is not compatible with the $ * $-structures. 
Every $ \pi \in \SSp(\Cf_c^\infty(G_q)) $ extends to a unital $ * $-representation $ \tilde{\pi}: \Cf^\infty(G_q) \rightarrow \Lbb(\Hcal_\pi) $. 
Therefore, every $ \pi \in \SSp(\Cf_c^\infty(G_q)) $ induces an algebra representation
\begin{equation*} \label{eq:representation of Cinfty(Kq)}
\theta_\pi: \Cf^\infty(K_q) \xrightarrow{i} \Cf^\infty(G_q) \xrightarrow{\tilde{\pi}} \Lbb(\Hcal_\pi).
\end{equation*}
For these representations we shall abbreviate $ v_\pi = v_{\theta_\pi} $, compare \eqref{eq:definition of v_varphi}.

\subsection{The main result} \label{sec:the main result}

Throughout we fix $ 0 < q < 1 $ and consider the quantum groups $ K_q $ and $ G_q $ as above. We start with the following Lemma which is a generalisation of \cite[Proposition~5.3.(1)]{Franz2021}. 

\begin{lem} \label{lem:simplification}
Let $ \pi = \pi_c \otimes \pi_\lambda \in \SSp(\Cf_c^\infty(G_q)) $ for $ \pi_c \in \SSp(\Cf^\infty(K_q)) $ and $ \lambda \in \weights^+ $. Then
$$
v_\pi = v_{\pi_c \otimes \widehat{\epsilon}_\lambda} v_{\epsilon_c \otimes \pi_{\lambda}} 
$$
in $ \prod_{\mu \in \weights^+} \Lbb(\Hcal_\pi) \otimes \Lbb(V(\mu)) $, where $ \epsilon_c $ and $ \widehat{\epsilon}_\lambda $ denote the trivial representations 
of $ \Cf^\infty(K_q) $ and $ \Dcal(K_q) $ on the representation spaces $\Hcal_{\pi_c} $ and $ V(\lambda) $, respectively.
\end{lem}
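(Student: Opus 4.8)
The plan is to reduce the claimed factorization to a computation at the level of the defining formula for $v_\theta$ in \eqref{eq:definition of v_varphi}, exploiting the fact that the embedding $i: \Cf^\infty(K_q) \to \Cf^\infty(G_q)$ is given by $i(f) = f_{(1)} \otimes l^-(f_{(2)})$, so that $\theta_\pi(f) = \tilde\pi(f_{(1)} \otimes l^-(f_{(2)}))$. First I would write $\pi = \pi_c \otimes \pi_\lambda$ acting on $\Hcal_{\pi_c} \otimes V(\lambda)$, and unwind $\tilde\pi$ on a product $f_{(1)} \otimes l^-(f_{(2)})$. Since the quantum double multiplication intertwines $\Cf^\infty(K_q)$ and $\Dcal(K_q)$ via the adjoint action, applying $\tilde\pi$ to $i(f)$ should produce, after using the coproduct of $l^-$ (which is a Hopf algebra homomorphism) and the definition of the Heisenberg-type pairing, an expression of the form $(\pi_c(f_{(1)}) \otimes \id_{V(\lambda)}) \cdot (\text{something built from } l^-(f_{(2)}) \text{ acting through } \pi_\lambda)$. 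The key structural input is that $\widehat\epsilon_\lambda$ is the counit, so $v_{\pi_c \otimes \widehat\epsilon_\lambda}$ only sees the ``$\Cf^\infty(K_q)$-leg'' $f_{(1)}$ composed with $\pi_c$, while $\epsilon_c$ being the counit means $v_{\epsilon_c \otimes \pi_\lambda}$ only sees the $l^-$-functional leg composed with $\pi_\lambda$.

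Next I would evaluate both sides against the legs of $\ww = \sum_\mu \sum_{ij} u^\mu_{ij} \otimes \omega^\mu_{ij}$. Concretely, to prove equality in $\prod_\mu \Lbb(\Hcal_\pi) \otimes \Lbb(V(\mu))$ it suffices to show $(\id \otimes \omega)(v_\pi) = (\id \otimes \omega)(v_{\pi_c \otimes \widehat\epsilon_\lambda} v_{\epsilon_c \otimes \pi_\lambda})$ for all $\omega \in \Dcal(K_q)^*$ — or even better, to use the earlier computation showing $(\id \otimes \omega_{f,g})(v_\theta) = \theta\big((\id \otimes \varphi)(\Delta(f)^*(1 \otimes g))\big)$, which translates the identity into a statement purely about the algebra homomorphisms $\theta_\pi$, $\theta_{\pi_c \otimes \widehat\epsilon_\lambda}$, $\theta_{\epsilon_c \otimes \pi_\lambda}$ and the product in $\Cf^\infty(K_q)$. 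The multiplicativity of $\ww$ under the coproduct — i.e. $\ww$ realizes $\Delta$ — together with the fact that $i$ is an algebra homomorphism (even if not a $*$-homomorphism) and that $\tilde\pi$ restricted to the two commuting ``corners'' decomposes as the external tensor product $\pi_c \boxtimes \pi_\lambda$, should give the factorization directly: $v_\pi^\mu = (\theta_{\pi_c \otimes \widehat\epsilon_\lambda} \otimes \id)(\ww^\mu) \cdot (\theta_{\epsilon_c \otimes \pi_\lambda} \otimes \id)(\ww^\mu)$ because applying $\tilde\pi \circ i$ to $u^\mu_{ij}$ and expanding via $\Delta(u^\mu_{ij}) = \sum_k u^\mu_{ik} \otimes u^\mu_{kj}$ splits the $\Hcal_{\pi_c}$-action from the $V(\lambda)$-action.

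The main obstacle I anticipate is bookkeeping the interaction between the two tensor legs of the quantum double $\Cf^\infty_c(G_q) = \Cf^\infty(K_q) \otimes \Dcal(K_q)$: the product in this algebra is \emph{not} the tensor product multiplication but a Drinfeld-double (bicrossed) product, so $\tilde\pi$ on $i(f)\,i(g)$ is where the $R$-matrix / $l$-functional cross-relations enter, and one must verify that these cross terms are precisely absorbed by the factor $v_{\pi_c \otimes \widehat\epsilon_\lambda}$ carrying the $l^-(f_{(2)})$-dependence in the correct order. Making this rigorous amounts to carefully citing the explicit form of the double's multiplication from \cite{VoigtYuncken} and checking that $l^-$ being a coalgebra homomorphism makes the Sweedler-notation manipulation go through; once the algebra homomorphism $\theta_\pi$ is seen to factor as a product (in $\Lbb(\Hcal_\pi)$, for each fixed argument) of the two simpler representations evaluated on the two halves of $\Delta(f)$, the identity for $v_\pi$ follows by applying $(\,\cdot\, \otimes \id)(\ww)$ and using that $\ww$ implements the coproduct. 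The symmetry and centrality hypotheses on weights play no role here; this is a purely algebraic identity in the multiplier algebra.
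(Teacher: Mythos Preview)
Your core approach matches the paper's: expand $\theta_\pi(u^\mu_{ij}) = \tilde\pi\big(\sum_k u^\mu_{ik} \otimes l^-(u^\mu_{kj})\big)$ and factor using the matrix-unit identity $\omega^\mu_{ik}\omega^\mu_{kj} = \omega^\mu_{ij}$. But the obstacle you anticipate is not real. In the convention of \cite{VoigtYuncken} used here, the quantum double $\Cf^\infty_c(G_q) = \Cf^\infty(K_q) \otimes \Dcal(K_q)$ carries the \emph{tensor product} $*$-algebra structure; it is only the comultiplication that is twisted. This is precisely why the irreducible $*$-representations are literally external tensor products $\pi_c \otimes \pi_\lambda$, and it means $\tilde\pi(f \otimes x) = \pi_c(f) \otimes \pi_\lambda(x)$ on simple tensors with no cross-terms or $R$-matrix relations to manage.

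Consequently the detour through evaluation against $\omega_{f,g}$ is unnecessary. The paper's proof is a three-line direct computation:
\[
v_\pi^\mu = \sum_{i,j,k} \pi_c(u^\mu_{ik}) \otimes \pi_\lambda(l^-(u^\mu_{kj})) \otimes \omega^\mu_{ij}
= \Big(\sum_{i,k} \pi_c(u^\mu_{ik}) \otimes \id \otimes \omega^\mu_{ik}\Big)\Big(\sum_{l,j} \id \otimes \pi_\lambda(l^-(u^\mu_{lj})) \otimes \omega^\mu_{lj}\Big),
\]
after which one identifies the two factors with $v^\mu_{\pi_c \otimes \widehat\epsilon_\lambda}$ and $v^\mu_{\epsilon_c \otimes \pi_\lambda}$ using $\widehat\epsilon \circ l^- = \epsilon$ (since $l^-$ is a Hopf algebra map) and $\epsilon(u^\mu_{ij}) = \delta_{ij}$.
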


\begin{proof}
Under the identification $ \Hcal_\pi \cong \Hcal_{\pi_c} \otimes V(\lambda) $, we have
\begin{align*}
v_\pi &= \Big(\sum_{1 \leq i,j \leq n_\mu} \theta_\pi (u^\mu _{ij}) \otimes \omega^\mu_{ij} \Big)_\mu \\
&= \Big(\sum_{1 \leq i, j,k \leq n_\mu} \pi_c (u^\mu _{ik} ) \otimes \pi_\lambda (l^- (u^\mu _{kj})) \otimes \omega ^\mu _{ij} \Big)_\mu \\
&= \Big(\sum_{1 \leq i, k \leq n_\mu} \pi_c(u^\mu_{ik}) \otimes \id \otimes \omega^\mu_{ik} \Big)_\mu 
\Big(\sum_{1 \leq l, j \leq n_\mu} \id \otimes \pi_\lambda(l^-(u^\mu_{lj})) \otimes \omega^\mu_{lj} \Big)_\mu \\
&= v_{\pi_c \otimes \widehat{\epsilon}_\lambda} v_{\epsilon_c \otimes \pi_\lambda}.
\end{align*}
This yields the claim. 
\end{proof}

\begin{cor} \label{cor:simplification}
Given a central weight $ w = (w(\mu))_{\mu \in \Irr(K_q)} $ on the dual of $ K_q $ and $ \pi = \pi_c \otimes \pi_{\lambda} \in \SSp(\Cf_c^\infty(G_q)) $ with $ \pi_c \in \SSp(\Cf^\infty(K_q)) $ and $ \lambda \in \weights^+ $, the map $ \theta_\pi $ extends to a completely bounded representation $ \tilde{\theta}_\pi: A(K_q,w) \rightarrow \Lbb(\Hcal_\pi) $ if and only if
\begin{equation*} \label{eq:CB extension condition for Kq}
\sup_{\mu \in \weights^+} \frac{\| v_{\epsilon_c \otimes \pi_\lambda}^\mu \|}{w(\mu)} < \infty.
\end{equation*}
\end{cor}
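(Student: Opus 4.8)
The plan is to combine Proposition~\ref{prop:CB extension condition} with the factorisation from Lemma~\ref{lem:simplification}. By Proposition~\ref{prop:CB extension condition}, applied to the algebra representation $\theta_\pi$, the map $\theta_\pi$ extends to a completely bounded representation of $A(K_q,w)$ if and only if
$$
\sup_{\mu \in \weights^+} \frac{\|v_\pi^\mu\|}{w(\mu)} < \infty.
$$
So the task reduces to showing that the supremum involving $\|v_\pi^\mu\|$ and the supremum involving $\|v_{\epsilon_c \otimes \pi_\lambda}^\mu\|$ are finite simultaneously, i.e.\ that $\|v_\pi^\mu\|$ and $\|v_{\epsilon_c \otimes \pi_\lambda}^\mu\|$ are comparable up to a $\mu$-independent constant.

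The key observation is that $v_{\pi_c \otimes \widehat{\epsilon}_\lambda}$ is a \emph{unitary} element of $\Lbb(\Hcal_\pi)\overline{\otimes}VN(K_q)$. Indeed, $\theta_{\pi_c \otimes \widehat{\epsilon}_\lambda}$ acts on $\Cf^\infty(K_q)$ essentially through $\pi_c$ after composing with the counit of $\Dcal(K_q)$ in the second tensor leg, so $v_{\pi_c \otimes \widehat{\epsilon}_\lambda} = (\pi_c \otimes \id)(\ww)$ where $\ww$ is the Kac--Takesaki operator of $K_q$; since $\ww$ is a unitary corepresentation and $\pi_c$ is a $*$-representation, each block $v_{\pi_c \otimes \widehat{\epsilon}_\lambda}^\mu = (\pi_c \otimes \id)(\ww^\mu) = (\pi_c\otimes\id)\big(\sum_{ij} u^\mu_{ij}\otimes\omega^\mu_{ij}\big)$ is unitary, being the image of the unitary corepresentation matrix $u^\mu$ under the $*$-homomorphism $\pi_c$. (More precisely one checks $\theta_{\pi_c\otimes\widehat\epsilon_\lambda}(u^\mu_{ij}) = \pi_c(u^\mu_{ij})$ using $\widehat\epsilon_\lambda(l^-(u^\mu_{kj})) = \delta_{kj}$, so the middle factor disappears exactly as in the proof of Lemma~\ref{lem:simplification}.) Therefore, from Lemma~\ref{lem:simplification}, $v_\pi^\mu = v_{\pi_c\otimes\widehat\epsilon_\lambda}^\mu\, v_{\epsilon_c\otimes\pi_\lambda}^\mu$ with the first factor unitary, and hence
$$
\|v_\pi^\mu\| = \|v_{\epsilon_c\otimes\pi_\lambda}^\mu\|
$$
for every $\mu \in \weights^+$. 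Substituting this equality into the criterion of Proposition~\ref{prop:CB extension condition} gives exactly the stated condition.

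I expect the only real point requiring care to be the verification that $v_{\pi_c\otimes\widehat\epsilon_\lambda}$ is genuinely unitary rather than merely norm-bounded --- i.e.\ identifying $\theta_{\pi_c\otimes\widehat\epsilon_\lambda}$ on the generators $u^\mu_{ij}$ with $\pi_c(u^\mu_{ij})$, which amounts to the computation $\widehat\epsilon_\lambda\circ l^-\circ(\text{second leg of }\Delta) = \id$ on $\Cf^\infty(K_q)$, using that $l^-$ is a counit-preserving Hopf algebra homomorphism and $\widehat\epsilon_\lambda$ is the counit of $\Dcal(K_q)$. Once that is in place, the rest is a direct substitution into Proposition~\ref{prop:CB extension condition} and Lemma~\ref{lem:simplification}; no estimates beyond multiplicativity of the operator norm under multiplication by a unitary are needed.
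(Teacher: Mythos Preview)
Your proof is correct and follows essentially the same route as the paper: reduce via Proposition~\ref{prop:CB extension condition} to the supremum condition on $\|v_\pi^\mu\|$, use Lemma~\ref{lem:simplification} to factor $v_\pi^\mu = v_{\pi_c\otimes\widehat\epsilon_\lambda}^\mu\, v_{\epsilon_c\otimes\pi_\lambda}^\mu$, observe that the first factor is unitary because $\pi_c$ is a $*$-representation and $(u^\mu_{ij})$ is a unitary matrix, and conclude $\|v_\pi^\mu\| = \|v_{\epsilon_c\otimes\pi_\lambda}^\mu\|$. The paper argues the unitarity directly from the fact that $\big(\pi_c(u^\mu_{ij})\otimes\id_{V(\lambda)}\big)_{i,j}$ is the image of the unitary $u^\mu$ under a $*$-homomorphism, whereas you take the slightly longer route via $\widehat\epsilon_\lambda\circ l^- = \epsilon(\cdot)\id_{V(\lambda)}$; both arrive at the same point, and one should note that the resulting operator is $\pi_c(u^\mu_{ij})\otimes\id_{V(\lambda)}$ rather than literally $\pi_c(u^\mu_{ij})$, though this does not affect the unitarity conclusion.
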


\begin{proof}
Note that $ v_{\pi_c \otimes \widehat{\epsilon}_\lambda} = \Big(\sum_{1 \leq i,j \leq n_\mu} \pi_c (u^\mu_{ij}) \otimes \id \otimes \omega^\mu_{ij} \Big)_\mu $ is a unitary 
in $ \prod_{\mu \in \weights^+} \Lbb(\Hcal_\pi) \otimes \Lbb(V(\mu)) $ since $ \pi_c $ is a $ * $-representation and $ (u^\mu_{ij})_{1 \leq i,j \leq n_\mu} $ is a unitary matrix 
in $ M_{n_\mu}(\Cf^\infty(K_q)) $. Therefore, by Lemma~\ref{lem:simplification}, we have
$$
\| v_\pi^\mu \| = \| v^\mu _{\pi_c \otimes \widehat{\epsilon}_\lambda} v^\mu_{\epsilon_c \otimes \pi_\lambda} \| = \| v^\mu_{\epsilon_c \otimes \pi_\lambda} \|
$$
for all $ \mu \in \weights^+ $. The conclusion now follows from Proposition~\ref{prop:CB extension condition}.
\end{proof}

Let us next calculate $ \| v^\mu _{\epsilon_c \otimes \pi_\lambda} \| $ for $ \mu, \lambda \in \weights^+ $.

\begin{lem} \label{lem:computation of v_mu}
Let $ \pi \cong \pi_c \otimes \pi_\lambda \in \SSp(\Cf^\infty (G_q))$ with $ \pi_c \in \SSp(\Cf^\infty(K_q)) $ and $ \lambda \in \weights^+ $. Then, for all $\mu \in \weights^+$,
\begin{equation*} \label{eq:computation of v_mu}
\| v_{\epsilon_c \otimes \pi_\lambda}^\mu \| = \| \pi_\lambda (l^-(u^\mu)) \|,
\end{equation*}
where $ l^-(u^\mu) $ is the matrix with entries $ l^-(u^\mu_{ij}) \in U_q^\Rbb(\kf)$ for $ 1 \leq i,j \leq n_\mu $, and the norm on the right hand side is the operator norm on $ M_{n_\mu}(\Lbb(V(\lambda))) \cong \Lbb(V(\mu)) \otimes \Lbb(V(\lambda)) $.
\end{lem}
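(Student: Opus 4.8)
The plan is to unwind the definition of $ v_{\epsilon_c \otimes \pi_\lambda}^\mu $ and recognize it as a matrix of operators built from the $ l^- $-functional. Recall from Lemma~\ref{lem:simplification} (and its proof) that, under $ \Hcal_\pi \cong \Hcal_{\pi_c} \otimes V(\lambda) $,
$$
v_{\epsilon_c \otimes \pi_\lambda} = \Big( \sum_{1 \leq i,j \leq n_\mu} \id_{\Hcal_{\pi_c}} \otimes \pi_\lambda(l^-(u^\mu_{ij})) \otimes \omega^\mu_{ij} \Big)_\mu ,
$$
so that the $ \mu $-component $ v_{\epsilon_c \otimes \pi_\lambda}^\mu $ acts on $ \Hcal_{\pi_c} \otimes V(\lambda) \otimes V(\mu) $ as $ \id_{\Hcal_{\pi_c}} \otimes \big( \sum_{i,j} \pi_\lambda(l^-(u^\mu_{ij})) \otimes \omega^\mu_{ij} \big) $. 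Since tensoring with the identity on $ \Hcal_{\pi_c} $ does not change the operator norm, it suffices to compute the norm of $ T := \sum_{i,j} \pi_\lambda(l^-(u^\mu_{ij})) \otimes \omega^\mu_{ij} \in \Lbb(V(\lambda)) \otimes \Lbb(V(\mu)) $.

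Next I would identify $ T $ with the matrix $ \pi_\lambda(l^-(u^\mu)) $ under the standard isomorphism $ \Lbb(V(\lambda)) \otimes \Lbb(V(\mu)) \cong M_{n_\mu}(\Lbb(V(\lambda))) $. Here one uses that $ \omega^\mu_{ij} = \la e_i^\mu | \cdot | e_j^\mu \ra $ are the matrix units of $ \End(V(\mu)) $ with respect to the chosen orthonormal basis $ (e_i^\mu) $; under the isomorphism that sends $ a \otimes \omega^\mu_{ij} $ to the matrix with $ a $ in the $ (i,j) $ entry and zeros elsewhere, the element $ T $ goes precisely to the matrix whose $ (i,j) $ entry is $ \pi_\lambda(l^-(u^\mu_{ij})) $, i.e.\ to $ \pi_\lambda(l^-(u^\mu)) $. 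Since this isomorphism is a $ * $-isomorphism of $ C^* $-algebras it preserves operator norms, giving $ \|T\| = \| \pi_\lambda(l^-(u^\mu)) \| $, where the right-hand norm is the operator norm on $ M_{n_\mu}(\Lbb(V(\lambda))) $. Chaining the two identity-preserving steps yields $ \| v_{\epsilon_c \otimes \pi_\lambda}^\mu \| = \| \pi_\lambda(l^-(u^\mu)) \| $, which is the claim.

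This argument is essentially a bookkeeping exercise, so I do not expect a genuine obstacle; the only point requiring a little care is keeping the tensor-factor ordering and leg-numbering consistent between the definition \eqref{eq:definition of v_varphi} of $ v_\theta $, the factorisation in Lemma~\ref{lem:simplification}, and the identification $ {l^\infty}\text{-}\prod_\mu \Lbb(\Hcal) \overline{\otimes} \Lbb(V(\mu)) \cong \Lbb(\Hcal) \overline{\otimes} VN(K_q) $. In particular one should check that the convention under which $ \ww = \sum_\mu \sum_{ij} u^\mu_{ij} \otimes \omega^\mu_{ij} $ in \eqref{eq:Fundamental multiplicative unitary} matches the convention under which $ \omega^\mu_{ij} $ are matrix units, so that the matrix-of-operators picture is the correct one and no transpose sneaks in. Modulo this verification the proof is the two-line computation above.
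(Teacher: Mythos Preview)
Your proposal is correct and follows exactly the same approach as the paper's proof, which is the two-line computation
\[
\| v_{\epsilon_c \otimes \pi_\lambda}^\mu \| = \Big\| \sum_{1 \leq i,j \leq n_\mu} \pi_\lambda (l^-(u^\mu_{ij})) \otimes \omega^\mu_{ij} \Big\| = \| \pi_\lambda (l^-(u^\mu)) \|.
\]
You simply make explicit what the paper leaves implicit (dropping the $\id_{\Hcal_{\pi_c}}$ tensor factor and identifying $\Lbb(V(\lambda)) \otimes \Lbb(V(\mu)) \cong M_{n_\mu}(\Lbb(V(\lambda)))$ via the matrix units $\omega^\mu_{ij}$); no transpose issue arises with the conventions in \eqref{eq:Fundamental multiplicative unitary}.
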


\begin{proof}
By definition, we have 
$$
\| v_{\epsilon_c \otimes \pi_\lambda}^\mu \| = \| \sum_{1 \leq i,j \leq n_\mu} \pi_\lambda (l^-(u^\mu_{ij})) \otimes \omega^\mu_{ij} \| 
= \| \pi_\lambda (l^-(u^\mu)) \|
$$
as claimed.
\end{proof}

\begin{lem} \label{lem:estimates on I(u_mu)}
For any $ \lambda, \mu \in \weights^+ $ we have 
\begin{equation*}\label{eq:estimates on I(u_mu)}
\| \pi_\lambda ( l^- (u^\mu ) ) \|= q^{-(\lambda, \mu)}.
\end{equation*}
\end{lem}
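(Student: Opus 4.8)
The plan is to reduce the computation of the operator norm $\| \pi_\lambda(l^-(u^\mu)) \|$ to an eigenvalue computation for the action of the $l$-functionals on highest/lowest weight vectors, exploiting the triangular structure of the universal $R$-matrix. Recall that $R = q^{t} \cdot (1 + \text{strictly upper triangular terms})$, where $t \in \mathfrak{h} \otimes \mathfrak{h}$ is the Cartan part; concretely, on a tensor product $V(\lambda) \otimes V(\mu)$ of weight vectors $v_\nu \otimes v_{\nu'}$, the leading factor acts by $q^{(\nu, \nu')}$ and the remaining terms raise the weight in the first leg while lowering it in the second. The matrix $l^-(u^\mu)$ has entries $(l^-(u^\mu_{ij}), \cdot) = (R^{-1}, u^\mu_{ij} \otimes \cdot)$, so $\pi_\lambda(l^-(u^\mu))$ is essentially the action of $(\pi_\mu \otimes \pi_\lambda)(R^{-1})$ rearranged as an element of $\Lbb(V(\mu)) \otimes \Lbb(V(\lambda))$.

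First I would make the identification precise: show that $\pi_\lambda(l^-(u^\mu))$, viewed in $\Lbb(V(\mu)) \otimes \Lbb(V(\lambda))$ via $u^\mu_{ij} = \la e^\mu_i | \cdot | e^\mu_j \ra$, coincides (up to the flip) with $(\pi_\mu \otimes \pi_\lambda)(R^{-1})$, or rather with a conjugate of it that does not change the norm. Then I would use the triangular decomposition $R^{-1} = (\text{strictly lower triangular}) \cdot q^{-t}$ to see that, with respect to a basis of weight vectors ordered by weight, the operator $(\pi_\mu \otimes \pi_\lambda)(R^{-1})$ is block-triangular with diagonal blocks given by the scalars $q^{-(\text{wt}_1, \text{wt}_2)}$. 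The key point is that the norm of such an operator is governed by its largest diagonal entry together with the nilpotent off-diagonal part; since $q^{-t}$ is self-adjoint and positive and the nilpotent part is, in a suitable sense, ``compatible'' with it, one should get exactly $\| (\pi_\mu \otimes \pi_\lambda)(R^{-1}) \| = q^{-(\lambda, \mu)}$, the extremal value being attained on the highest weight vector of $V(\mu)$ tensored with the highest weight vector of $V(\lambda)$ (weights $\mu$ and $\lambda$, giving $q^{-(\mu,\lambda)}$), while all other weight pairs $(\nu, \nu')$ with $\nu \leq \mu$, $\nu' \leq \lambda$ satisfy $(\nu, \nu') \leq (\mu,\lambda)$ by dominance.

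For the lower bound I would simply evaluate on the highest weight vector: $\pi_\lambda(l^-(u^\mu))$ applied to the appropriate extremal vector picks out the scalar $q^{-(\lambda,\mu)}$ from the Cartan part of $R^{-1}$, all the raising/lowering corrections vanishing on a highest (resp.\ lowest) weight vector. For the upper bound, rather than estimating the nilpotent part directly, a cleaner route is to use the known fact (from the compact quantum group $K_q$) that the corepresentation matrix $u^\mu$ is unitary, together with the relation $l^\pm(f)^* = l^\mp(f^*)$ from \eqref{eq:l functions and involution}: this lets one relate $\pi_\lambda(l^-(u^\mu))$ to $\pi_\lambda(l^+(u^\mu))$ and to the positive self-adjoint operator built from $q^{-t}$, and then control the norm via the polar decomposition, pinning it to the largest weight pairing. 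Alternatively one can appeal to the standard computation of the norm of the $R$-matrix action on tensor products of irreducibles, which is exactly $q^{-(\lambda,\mu)}$.

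I expect the main obstacle to be the upper bound $\| \pi_\lambda(l^-(u^\mu)) \| \leq q^{-(\lambda,\mu)}$: the off-diagonal (nilpotent) part of $(\pi_\mu \otimes \pi_\lambda)(R^{-1})$ is not small, and a naive triangular estimate would only give an exponential-in-dimension bound. The right argument needs to use that the nilpotent corrections genuinely lower the relevant pairing $(\text{wt}_1, \text{wt}_2)$ (so that conjugating by $q^{-t/2}$, or a comparable rescaling, turns $(\pi_\mu \otimes \pi_\lambda)(R^{-1})$ into an operator whose norm is visibly $q^{-(\lambda,\mu)}$ times something of norm one), or to invoke the unitarity of $u^\mu$ and the $*$-compatibility \eqref{eq:l functions and involution} to convert the question into one about a manifestly positive operator whose spectrum is read off the weights. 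Getting this rescaling/positivity argument clean, uniformly in $\lambda$ and $\mu$, is the crux.
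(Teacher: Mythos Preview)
Your identification $\pi_\lambda(l^-(u^\mu)) = (\pi_\mu \otimes \pi_\lambda)(R^{-1})$ is exactly the paper's first step, and your lower bound via highest weight vectors is fine. The gap is the upper bound: none of your three suggested routes actually closes it.

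The triangular/diagonal argument gives only the spectral radius, not the norm, as you yourself note. The rescaling by $q^{-t/2}$ does not work: the off-diagonal terms of $R^{-1}$ are products of $E$'s and $F$'s whose norms on $V(\mu)\otimes V(\lambda)$ are not controlled by the weight pairing alone, so conjugation by $q^{-t/2}$ does not produce an operator of norm~$1$. And ``appeal to the standard computation of the norm of the $R$-matrix action'' is circular unless you say what that computation is.

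Your third idea, using $l^\pm(f)^* = l^\mp(f^*)$, is pointing in the right direction but you stop short of the key observation. What the paper does is use the relation $R_{21}=R^*$ (equivalently, your $*$-compatibility relation) to compute
\[
\pi_\lambda(l^-(u^\mu))\,\pi_\lambda(l^-(u^\mu))^* \;=\; (\pi_\mu\otimes\pi_\lambda)(R^{-1}R_{21}^{-1}) \;=\; (\pi_\mu\otimes\pi_\lambda)\bigl((R_{21}R)^{-1}\bigr).
\]
The point is that $R_{21}R$ is a $U_q(\gf)$-intertwiner on $V(\mu)\otimes V(\lambda)$, hence acts as a scalar on each isotypical component $V(\nu)\subseteq V(\mu)\otimes V(\lambda)$; the scalar is the Drinfeld quantum Casimir value $q^{(\nu,\nu+2\rho)-(\mu,\mu+2\rho)-(\lambda,\lambda+2\rho)}$. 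Maximising $q^{(\mu,\mu+2\rho)+(\lambda,\lambda+2\rho)-(\nu,\nu+2\rho)}$ over $\nu\subseteq\mu\otimes\lambda$ is then an elementary dominance computation yielding $q^{-2(\lambda,\mu)}$ at $\nu=\mu+\lambda$. This replaces your nilpotent-plus-Cartan analysis entirely: there is no need to separate $R$ into its triangular pieces, and the positive operator you want is $(R_{21}R)^{-1}$, not something ``built from $q^{-t}$''. Without this step your proposal does not yield the upper bound.
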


\begin{proof}
First note that 
$$
l^- (u^\mu _{ij} ) = (R^{-1} , u^\mu _{ij} \otimes (\; \cdot \;) ) = \big( \la e^{\mu} _{i} | \cdot | e^{\mu} _j \ra \otimes \id \big) (\pi_\mu \otimes \id) (R^{-1})
$$
for $ 1 \leq i,j \leq n_\mu $, and hence 
$$ 
\la e_k ^\lambda, \pi_\lambda( l^- (u^\mu _{ij}))(e_l^\lambda) \ra = \big(\la e^\mu_i \otimes e^\lambda_k | \; \cdot \; | e^\mu_j \otimes e^\lambda_l \ra \big) (\pi_\mu \otimes \pi_\lambda) (R^{-1})
$$
for $1 \leq k,l \leq n_\lambda $. 
Thus we get
$$
\pi_\lambda (l^- (u^\mu )) = (\pi_\mu \otimes \pi_\lambda) (R^{-1}) \in \End(V(\mu)) \otimes \End(V(\lambda)).
$$
Next recall that $ R_{21} = R^* $ where $ R_{21} = \sigma \circ R $, 
which can be seen from \cite[Lemma~4.14]{VoigtYuncken}, using that $ (\widehat{S}^{-1} \otimes \widehat{S}^{-1})(R) = R $. 
It follows that
$$
(\pi_\mu \otimes \pi_\lambda) (R^{-1} )^* = (\pi_\mu \otimes \pi_\lambda) (R_{21} ^{-1}), 
$$
and we get 
\begin{equation}\label{eq:relating the l-representation to the R-representation}
\pi_\lambda (l^- (u^\mu)) \pi_\lambda (l^- (u^\mu))^* = (\pi_\mu \otimes \pi_\lambda) (R^{-1} R_{21} ^{-1} ) = (\pi_\mu \otimes \pi_\lambda) (R_{21} R)^{-1}
\end{equation}
in $\End(V(\mu))\otimes \End(V(\lambda))$. 
Now for $ \nu \subseteq \mu \otimes \lambda $ 
the latter matrix acts on the isotypical component of type $ \nu $ 
inside $ V(\mu) \otimes V(\lambda) $ by  
\begin{equation*}\label{eq:tensor decomposition of the R-matrix}
q^{(\mu, \mu + 2 \rho ) + (\lambda, \lambda + 2 \rho) - (\nu , \nu + 2 \rho)} \id, 
\end{equation*}
compare \cite[Section~8.4.3, Proposition~22]{Klimyk}. 
Let $ n_1, \dots, n_N \in \Nbb $ such that $ \nu = \mu + \lambda - \sum_{1 \leq j \leq N} n_j \alpha_j $. Then we have
\begin{align*}
(\nu, \nu + 2\rho) &= (\mu + \lambda - \sum_{1 \leq j \leq N} n_j \alpha_j, \nu + 2 \rho) \\
&\leq (\mu + \lambda, \nu + 2 \rho) \\
&= (\mu + \lambda , \mu + \lambda + 2 \rho  - \sum_{1 \leq j \leq N} n_j \alpha_j) 
\leq (\mu + \lambda , \mu + \lambda + 2 \rho).
\end{align*}
Since $ 0 < q < 1 $, it follows that the operator norm of $ (\pi_\mu \otimes \pi_\lambda) (R_{21} R)^{-1} $ is given by
$$
\sup_{\nu \subseteq \mu \otimes \lambda} q^{(\mu, \mu+ 2 \rho ) + (\lambda, \lambda+ 2 \rho) - (\nu , \nu + 2 \rho)} = q^{(\mu, \mu+ 2 \rho ) + (\lambda, \lambda+ 2 \rho) - (\mu+\lambda , \mu+\lambda + 2 \rho)} = q^{-2 (\lambda, \mu)}. 
$$
Combining this with \eqref{eq:relating the l-representation to the R-representation} completes the proof. 
\end{proof}

\begin{rmk}\label{rmk:I isomorphism}
Consider the linear map $ I: \Cf^\infty(K_q) \rightarrow U_q(\gf) $ given by 
\begin{equation*} \label{eq:definition of I isomorphism}
I(f) = l^-(f_{(1)}) \widehat{S}(l^+(f_{(2)})), \quad f \in \Cf^\infty (K_q).
\end{equation*}
Then $ I $ is a linear isomorphism from $ \Cf^\infty(K_q) $ onto the locally finite part of $ U_q(\gf) $, compare \cite[Section~3.12]{VoigtYuncken}.

We observe that
\begin{align*}\label{eq:relation between I-matrix and l-matrix}
I(u^\mu _{ij}) = \sum_{1 \leq k \leq n_\mu} l^-(u^\mu _{ik}) \widehat{S}(l^+(u^\mu _{kj})) &= 
\sum_{1 \leq k \leq n_\mu} l^-(u^\mu _{ik}) l^+({S(u^\mu _{kj}})) \nonumber \\
&= \sum_{1 \leq k \leq n_\mu} l^-(u^\mu _{ik}) l^+({u^\mu _{jk}}^*) 
= \sum_{1 \leq k \leq n_\mu} l^- (u^\mu _{ik}) l^- (u^\mu _{jk}) ^*
\end{align*}
for all $ 1 \leq i,j \leq n_\mu$ using that $ l^+ $ is a Hopf algebra homomorphism and \eqref{eq:l functions and involution}. Hence $I(u^\mu) = l^- (u^\mu) l^- (u^\mu)^*$ in $M_{n_\mu} (U_q ^\Rbb (\kf))$. Lemma~\ref{lem:estimates on I(u_mu)} thus shows
$$
\| \pi_\lambda (I(u^\mu)) \| = \| \pi_\lambda (l^- (u^\mu )) \|^2 = q^{-2 (\lambda , \mu)}
$$
for all $\lambda, \mu \in \weights^+$. 
\end{rmk}

We now introduce a family of weights on the dual of $ K_q $ that can be used to detect the irreducible $ * $-representations of $ \Cf_c^\infty(G_q) $.

\begin{prop} \label{prop:weight on the dual of Kq}
For $ \beta \geq 1 $ define $ w_\beta: \weights^+ \rightarrow (0, \infty) $ by 
\begin{equation*} \label{eq:weight on the dual of Kq}
w_\beta (\mu) = \beta^{|\mu|},
\end{equation*}
where $ |\mu| = (\mu , \mu)^{\frac{1}{2}}$. 
Then $ w_\beta = (w_\beta(\mu))_{\mu \in \weights^+} $ is a symmetric central weight on the dual of $ K_q $.
\end{prop}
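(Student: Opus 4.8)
The plan is to verify the two conditions Z1 and Z2 of Proposition~\ref{prop:central weight on the dual}, together with the symmetry criterion stated there, using the identification $ \Irr(K_q) \cong \weights^+ $. Condition Z1 is immediate: since $ \beta \geq 1 $ and $ |\mu| = (\mu,\mu)^{1/2} \geq 0 $, we get $ w_\beta(\mu) = \beta^{|\mu|} \geq 1 $ for every $ \mu \in \weights^+ $.

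For Z2, I would first note that because $ \beta \geq 1 $, the required inequality $ w_\beta(\nu) \leq w_\beta(\lambda) w_\beta(\mu) $ for $ \nu \subseteq \lambda \otimes \mu $ is equivalent to the triangle-type inequality $ |\nu| \leq |\lambda| + |\mu| $. The key point is that it suffices to prove the sharper bound $ |\nu| \leq |\lambda + \mu| $, after which $ |\lambda+\mu| \leq |\lambda| + |\mu| $ follows from the Cauchy--Schwarz inequality, valid since the rescaled Killing form is positive definite on $ (i\tf)^* $. To prove $ |\nu| \le |\lambda+\mu| $, recall (as already used in the proof of Lemma~\ref{lem:estimates on I(u_mu)}) that $ \nu \subseteq \lambda \otimes \mu $ forces $ \nu $ to be dominant with $ \lambda + \mu - \nu = \sum_{1 \leq j \leq N} n_j \alpha_j $ for some $ n_j \in \Nbb $. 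Since $ \nu $ and $ \lambda + \mu $ are both dominant, $ (\nu, \alpha_j) \geq 0 $ and $ (\lambda + \mu, \alpha_j) \geq 0 $ for all $ j $, hence
\begin{align*}
(\nu, \nu) &= (\lambda + \mu, \nu) - \sum_{1 \leq j \leq N} n_j (\alpha_j, \nu) \leq (\lambda + \mu, \nu) \\
&= (\lambda + \mu, \lambda + \mu) - \sum_{1 \leq j \leq N} n_j (\lambda + \mu, \alpha_j) \leq (\lambda + \mu, \lambda + \mu),
\end{align*}
which yields $ |\nu| \leq |\lambda + \mu| $, and therefore Z2.

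For symmetry I would use that the conjugate $ \overline{\mu} \in \weights^+ $ is the highest weight of $ V(\mu)^* $, that is $ \overline{\mu} = -w_0\mu $ where $ w_0 $ is the longest element of the Weyl group of $ \gf $; since $ (\cdot,\cdot) $ is Weyl invariant, $ |\overline{\mu}| = |w_0\mu| = |\mu| $, so $ w_\beta(\overline{\mu}) = w_\beta(\mu) $, and Proposition~\ref{prop:central weight on the dual} then shows that $ w_\beta $ is a symmetric central weight. I do not expect a genuine obstacle here: the only step carrying any content is the monotonicity of $ |\cdot| $ along the dominance order of dominant weights, which is exactly the short computation above and runs parallel to the estimate in the proof of Lemma~\ref{lem:estimates on I(u_mu)}.
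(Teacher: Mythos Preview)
Your proof is correct. The verification of Z1 and of symmetry matches the paper's essentially verbatim; for symmetry you supply the standard justification $\overline{\mu} = -w_0\mu$ and Weyl invariance of the form, which the paper leaves implicit.

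For Z2 you take a different route. The paper observes that any highest weight $\nu \subseteq \lambda \otimes \mu$ is in particular a weight of $V(\lambda) \otimes V(\mu)$, hence $\nu = \lambda' + \mu'$ with $\lambda'$ a weight of $V(\lambda)$ and $\mu'$ a weight of $V(\mu)$; then the triangle inequality and the fact that every weight of $V(\lambda)$ lies in the convex hull of the Weyl orbit of $\lambda$ (so $|\lambda'| \leq |\lambda|$, and likewise for $\mu'$) give $|\nu| \leq |\lambda| + |\mu|$ directly. Your argument instead exploits that $\lambda + \mu - \nu$ is a non-negative integer combination of simple roots and that both $\nu$ and $\lambda + \mu$ are dominant, yielding the sharper intermediate inequality $|\nu| \leq |\lambda + \mu|$, followed by Cauchy--Schwarz. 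Both are short and standard; the paper's approach is marginally more direct, while yours isolates a monotonicity of $|\cdot|$ along the dominance order on $\weights^+$ that parallels the estimate in Lemma~\ref{lem:estimates on I(u_mu)}, as you note.
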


\begin{proof}
Condition Z1 of Proposition~\ref{prop:central weight on the dual} holds by construction. To check Z2, let $\nu \subseteq \lambda \otimes \mu$. Then, since $\nu$ appears as a weight in the representation $\pi_\lambda \otimes \pi_\mu$, we have $ \nu = \lambda' + \mu' $ for some weights $ \lambda' $ of $ V(\lambda) $ and $ \mu' $ of $ V(\mu) $, respectively. Thus
$$
|\nu| = |\lambda' + \mu'| \leq |\lambda'| + |\mu'| \leq |\lambda| + |\mu|.
$$
This proves $ w_\beta(\nu) = \beta^{|\nu|} \leq \beta^{|\lambda|} \beta^{|\mu|} = w_\beta (\lambda) w_\beta (\mu) $ as required.
Finally, the symmetry of $ w_\beta $ follows immediately from the fact that $ |\mu| = |\overline{\mu}| $ for all $ \mu \in \weights^+ $.
\end{proof}

Combining the above considerations we arrive at the following result. 

\begin{thm} \label{thm:the main result}
Fix $ \beta \geq 1 $. For any $ \pi_c \in \SSp(\Cf^\infty(K_q)) $ we have  
\begin{align}\label{eq:the main result 1}
\Big\{\lambda \in \weights^+ \Big\mid \theta_{\pi_c \otimes \pi_\lambda} \text{ CB-extends to $ A(K_q, w_\beta)$} \Big\} = \Big\{\lambda \in \weights^+ \Big\mid \; q^{- |\lambda|} \leq \beta \Big\}.
\end{align}
Hence 
\begin{align} \label{eq:the main result 2}
\SSp(\Cf_c^\infty(G_q)) &= \bigcup_{\beta \geq 1} \bigg\{\pi \in \SSp(\Cf_c^\infty(G_q)) \mid \begin{array}{clc} \theta_\pi \text{ admits a CB extension to } A(K_q, w_\beta) \end{array} \bigg\}, 
\end{align}
and each set in the union on the right hand side contains all representations of the form $ \pi = \pi_c \otimes \widehat{\epsilon} $ for $ \pi_c \in \SSp(\Cf^\infty(K_q)) $, where $ \widehat{\epsilon} $ denotes the counit of $ \Dcal(K_q) $.  
\end{thm}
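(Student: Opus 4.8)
The plan is to reduce the entire statement to a single scalar inequality, assembling the lemmas already established. Fix $\pi_c \in \SSp(\Cf^\infty(K_q))$ and $\lambda \in \weights^+$, and put $\pi = \pi_c \otimes \pi_\lambda$. By Corollary~\ref{cor:simplification}, the representation $\theta_\pi$ CB-extends to $A(K_q, w_\beta)$ precisely when $\sup_{\mu \in \weights^+} \| v_{\epsilon_c \otimes \pi_\lambda}^\mu \| / w_\beta(\mu) < \infty$. Substituting Lemma~\ref{lem:computation of v_mu} and Lemma~\ref{lem:estimates on I(u_mu)} and using $w_\beta(\mu) = \beta^{|\mu|}$, this condition becomes
\[
\sup_{\mu \in \weights^+} \frac{q^{-(\lambda,\mu)}}{\beta^{|\mu|}} < \infty ,
\]
so everything comes down to deciding when this supremum is finite.

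For the implication ``$q^{-|\lambda|} \le \beta \Rightarrow$ finite supremum'' I would use Cauchy--Schwarz, $(\lambda,\mu) \le |\lambda|\,|\mu|$; since $0 < q < 1$ this gives $q^{-(\lambda,\mu)} \le (q^{-|\lambda|})^{|\mu|} \le \beta^{|\mu|}$, so each quotient $q^{-(\lambda,\mu)}/\beta^{|\mu|}$ is at most $1$. For the converse, ``$q^{-|\lambda|} > \beta \Rightarrow$ infinite supremum'', I would test along the ray $\mu = n\lambda$ with $n \in \Nbb$ (note $n\lambda \in \weights^+$): there $(\lambda, n\lambda) = n|\lambda|^2$ and $|n\lambda| = n|\lambda|$, so the corresponding quotient equals $(q^{-|\lambda|}/\beta)^{n|\lambda|}$, which tends to $\infty$ as $n \to \infty$ because $q^{-|\lambda|}/\beta > 1$ and $|\lambda| > 0$; the case $|\lambda| = 0$ cannot occur under the hypothesis $q^{-|\lambda|} > \beta$, since then $q^{-|\lambda|} = 1 \le \beta$. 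This establishes \eqref{eq:the main result 1}.

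From here \eqref{eq:the main result 2} follows formally: the inclusion ``$\supseteq$'' is trivial, and for ``$\subseteq$'', given $\pi = \pi_c \otimes \pi_\lambda \in \SSp(\Cf_c^\infty(G_q))$ one takes $\beta = q^{-|\lambda|} \ge 1$, which satisfies $q^{-|\lambda|} \le \beta$, so by \eqref{eq:the main result 1} $\theta_\pi$ CB-extends to $A(K_q, w_\beta)$ and $\pi$ lies in the corresponding member of the union. The last assertion is the special case $\lambda = 0$: the counit $\widehat{\epsilon}$ of $\Dcal(K_q)$ is $\pi_0$, and $|\lambda| = 0$ forces $q^{-|\lambda|} = 1 \le \beta$ for every $\beta \ge 1$, so each $\theta_{\pi_c \otimes \widehat{\epsilon}}$ CB-extends to every $A(K_q, w_\beta)$ by \eqref{eq:the main result 1}.

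The only step that requires any genuine thought is the converse implication: one must exhibit an explicit family of weights along which $q^{-(\lambda,\mu)}/\beta^{|\mu|}$ blows up, and the key observation is that the rays $\mu = n\lambda$ already suffice, because $(\lambda, n\lambda)$ grows quadratically in $n$ while $|n\lambda|$ grows only linearly. Everything else amounts to bookkeeping with the results proved above, so I do not anticipate a serious obstacle.
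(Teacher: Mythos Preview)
Your proof is correct and follows essentially the same route as the paper: reduce via Corollary~\ref{cor:simplification}, Lemma~\ref{lem:computation of v_mu} and Lemma~\ref{lem:estimates on I(u_mu)} to the scalar condition $\sup_\mu q^{-(\lambda,\mu)}/\beta^{|\mu|} < \infty$, then use Cauchy--Schwarz for one direction and test along the ray $\mu = n\lambda$ for the other (the paper phrases the latter as a contradiction argument with a hypothetical bad $\mu_0$, but ultimately also specialises to $\mu = \lambda$). One small correction to your closing remark: $(\lambda, n\lambda) = n|\lambda|^2$ grows \emph{linearly} in $n$, not quadratically; the point is rather that its coefficient $|\lambda|^2$ dominates the coefficient $|\lambda|$ of $|n\lambda|$ precisely when $|\lambda|\log q^{-1} > \log\beta$.
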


\begin{proof}
Combining Lemma ~\ref{lem:computation of v_mu}, Lemma \ref{lem:estimates on I(u_mu)} and the definition of $ w_\beta $ we obtain
\begin{equation*}
\frac{\| v^{\mu} _{\epsilon_c \otimes \pi_\lambda} \|}{w_\beta (\mu)}
= \frac{q^{- ( \lambda, \mu )}}{\beta^{|\mu|}}
\end{equation*}
for all $ \lambda, \mu \in \weights^+ $. Thus, Corollary~\ref{cor:simplification} shows that 
\begin{equation}\label{eq:the key estimates}
\text{$\theta_{\pi_c \otimes \pi_\lambda}$ CB-extends to $ A(K_q , w_\beta) $ if and only if
$\sup_{\mu \in \weights^+} \frac{q^{-(\lambda, \mu)}}{\beta^{|\mu|}} < \infty$}
\end{equation}
for $\lambda \in \weights^+$ and any $ \pi_c \in \SSp(\Cf^\infty(K_q)) $. Since $ q^{-1} > 1 $ we have
$$
q^{-(\lambda, \mu)} \leq q^{-|\lambda| | \mu |}, \quad \lambda , \mu \in \weights^+,
$$
which, in light of \eqref{eq:the key estimates}, proves the inclusion ``$\supseteq $" in \eqref{eq:the main result 1}.

For the reverse inclusion, assume that $ \theta_{\pi_c \otimes \pi_\lambda} $ extends to a completely bounded representation of $ A(K_q , w_\beta) $ for $ \lambda \in \weights^+ $. If there was $ \mu_0 \in \weights^+ $ such that $ (\lambda, \mu_0) > | \mu_0 | \frac{\log \beta}{\log q^{-1}} $, then we would obtain 
$$ 
\Big(\frac{q^{-(\lambda ,m \mu_0)}} {\beta^{|m\mu_0|}}\Big) = \Big(e^{(\lambda, \mu_0 ) \log q^{-1} - |\mu_0 | \log \beta } \Big)^{m} \xrightarrow[m \rightarrow \infty]{} \infty,
$$
which contradicts \eqref{eq:the key estimates}. Thus, we must have $ (\lambda, \mu) \leq |\mu| \frac{\log \beta}{\log q^{-1}} $ for all $ \mu \in \weights^+ $. In particular, setting $ \mu = \lambda $ we get
$$
|\lambda| \leq \frac{\log \beta}{\log q^{-1}},
$$
proving the inclusion ``$\subseteq$" in \eqref{eq:the main result 1}. Finally note that \eqref{eq:the main result 2} is an immediate  consequence of \eqref{eq:the main result 1}.
\end{proof}

The following special case of Theorem \ref{thm:the main result} can be thought of as a quantum version of Eymard duality \cite{Eymard1964} in that the ``noncommutative space" $\SSp(\Cf^\infty(K_q))$ underlying the quantum group $ K_q $ shows up as a set of algebra representations of $ A(K_q) $.

\begin{cor}\label{cor:quantum Eymard's duality?}
We have 
\begin{align*}
\bigg\{\pi \in \SSp(\Cf_c^\infty(G_q)) \mid \begin{array}{clc} \theta_\pi \text{ admits a CB extension to } A(K_q) \end{array} \bigg\} \\
= \big\{ \pi_c \otimes \widehat{\epsilon} \in \SSp(\Cf_c^\infty(G_q) \mid \pi_c \in \SSp(\Cf^\infty(K_q)) \big\}.
\end{align*}
\end{cor}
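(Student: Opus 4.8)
The plan is to obtain this immediately as the $ \beta = 1 $ case of Theorem~\ref{thm:the main result}. First I would observe that the weight $ w_1 $ of Proposition~\ref{prop:weight on the dual of Kq} satisfies $ w_1(\mu) = 1^{|\mu|} = 1 $ for every $ \mu \in \weights^+ $, so that $ w_1^{-1} $ is the identity of $ VN(K_q) $, the subalgebra $ L^1_{w_1}(\widehat{K_q}) $ is all of $ L^1(\widehat{K_q}) $, and hence $ A(K_q, w_1) = A(K_q) $ as Banach algebras. Consequently the left-hand side of the claimed identity is exactly the $ \beta = 1 $ member of the union in \eqref{eq:the main result 2}.

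Next I would specialise the description of this set furnished by \eqref{eq:the main result 1}. For $ \beta = 1 $ the defining inequality $ q^{-|\lambda|} \leq \beta $ becomes $ q^{-|\lambda|} \leq 1 $, and since $ 0 < q < 1 $ this forces $ |\lambda| = (\lambda, \lambda)^{1/2} = 0 $. Because the rescaled Killing form is positive definite on the real span of the weight lattice, this happens precisely when $ \lambda = 0 $. Theorem~\ref{thm:the main result} then gives, for every $ \pi_c \in \SSp(\Cf^\infty(K_q)) $,
\[
\big\{ \lambda \in \weights^+ \mid \theta_{\pi_c \otimes \pi_\lambda} \text{ CB-extends to } A(K_q) \big\} = \{0\},
\]
so that $ \theta_{\pi_c \otimes \pi_\lambda} $ admits a CB extension to $ A(K_q) $ if and only if $ \lambda = 0 $.

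Finally I would identify $ \pi_0 $ with the counit $ \widehat{\epsilon} $ of $ \Dcal(K_q) $: in the decomposition $ \Dcal(K_q) = \bigoplus_{\mu \in \weights^+} \End(V(\mu)) $ the representation $ \pi_0 $ is the projection onto the block $ \End(V(0)) \cong \Cbb $, which is precisely the counit. Since every $ \pi \in \SSp(\Cf_c^\infty(G_q)) $ has the form $ \pi_c \otimes \pi_\lambda $ with $ \pi_c \in \SSp(\Cf^\infty(K_q)) $ and $ \lambda \in \weights^+ $, running over all such $ \pi $ then yields the asserted equality of sets. I expect no genuine obstacle here: the corollary is really just bookkeeping on top of Theorem~\ref{thm:the main result}, the only steps deserving a word of justification being the identification $ A(K_q, w_1) = A(K_q) $ and the equivalence $ |\lambda| = 0 \Leftrightarrow \lambda = 0 $, both of which are routine.
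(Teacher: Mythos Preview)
Your proposal is correct and follows exactly the paper's approach: the paper's proof is the single sentence ``Set $\beta = 1$ in the first assertion of Theorem~\ref{thm:the main result},'' and you have simply unpacked that sentence by making explicit the identifications $A(K_q,w_1)=A(K_q)$, $q^{-|\lambda|}\leq 1 \Leftrightarrow \lambda=0$, and $\pi_0=\widehat{\epsilon}$.
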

\begin{proof}
Set $ \beta = 1 $ in the first assertion of Theorem~\ref{thm:the main result}.
\end{proof}

\begin{rmk} \label{rmk:comparison with the work of Franz}
Consider the case $ K = SU(2) $ and identify $\weights^+ \cong \frac{1}{2} \Nbb$. For $ s \in \frac{1}{2} \Nbb $ we have 
$$
|s| = (s,s)^{\frac{1}{2}} 
= \sqrt{2}s,
$$
compare \cite[Section 6.7]{VoigtYuncken}. 
Hence, setting $ \gamma = \beta^{\sqrt{2}} $ we see that $w_\gamma(s) = \beta^{2s} = w_\beta^{FL}(s) $ is the weight considered by Franz and Lee, compare \eqref{eq:weight on the dual of SUq(2)}. If one restricts attention to positive deformation parameters $ q $ it follows that Theorem~\ref{thm:the main result} becomes precisely \cite[Theorem~5.6]{Franz2021} in this case, see Theorem~\ref{thm:the main result of Franz and Lee}.

Our approach yields in fact a slightly stronger result even for $ K = SU(2) $. More precisely, it follows from Lemma~\ref{lem:estimates on I(u_mu)} that the constant $ C_q $ in \cite[Theorem~5.5]{Franz2021} can be chosen to be $ 1 $. 
\end{rmk}

\emph{Acknowledgements}: This work began when H. Lee was visiting C. Voigt at the University of Glasgow, which was supported by the Basic Science Research Program through the National Research Foundation of Korea (NRF) Grant NRF-2022R1A2C1092320.

\printbibliography 

\end{document}